\newtheorem{theorem}{Theorem}
\newtheorem{proposition}[theorem]{Proposition}
\newtheorem*{proposition*}{Proposition}
\newtheorem{lemma}[theorem]{Lemma}
\newtheorem*{lemma*}{Lemma}
\newtheorem{corollary}[theorem]{Corollary}
\DeclarePairedDelimiter{\norm}{\lVert}{\rVert} 
\newcommand{\carac}{\scalebox{1.4}{$\chi$}}
\newcommand{\Var}{\operatorname{Var}}
\newcommand{\Vol}{\operatorname{Vol}}
\newcommand{\mytitle}{Equidistribution of points in the Harmonic ensemble for the Wasserstein distance}
\title{\mytitle}
\date{}
\newif\ifuniqueAffiliation
\begin{document}
\maketitle

\begin{abstract}
	We study the asymptotics of the expected Wasserstein distance between the empirical measure of a Point Process and the background volume form. The main DPP studied is the harmonic ensemble, where we get the optimal rate of convergence for homogeneous manifolds of dimension $d\geq 3$, and for two-point homogeneous manifolds. We also discuss some variations of this process on the torus. Regarding other point processes, we find the optimal rate for the spherical ensemble and the zeros of Gaussian Analytic Functions.
\end{abstract}

\keywords{Determinantal Point Process \and Harmonic ensemble \and Spherical ensemble \and Gaussian Analytic Function}

\section*{Acknowledgements}
The author has been supported by the grant ``Ayudas para contratos predoctorales para la formación de doctores 2022'', by the Agencia Estatal de Investigación and by the FSE+, project PID2021-123405NB-I00; by the project PID2024-160033NB-I00; and by the Departament de Recerca i Universitats, grant 2021 SGR 00087.

I would also like to thank Joaquim Ortega-Cerdà for his insights during the writing of this paper.


\section{Introduction}

Determinantal Point Processes (abbreviated DPPs) are a type of random point processes that have gained interest due to their applications in physics, their frequent appearance in random matrix theory, and their ability to get uniformly distributed points. Unlike other ways of getting random points, like choosing them from identically independently distributed random variables, DPPs have a built-in repulsion between points. Due to this, they are used to model fermionic particle systems, where the particles have all the same charge and therefore are influenced by repulsive forces. Other notable DPPs are constructed by taking eigenvalues or singular values of matrices (under some assumptions), like the Gaussian Unitary Ensemble. 

This gives a motivation to check whether for a DPP the empirical measure $\frac{1}{N} \sum_{n=1}^N \delta_{x_n}$ approximates the background measure. When it is the case, we can also try to quantify this equidistribution. There are several ways of doing it, and perhaps the most common one is through the discrepancy and the Wasserstein distance. In this paper we focus on the latter. It is worth commenting that we will work with $W_2$, while much of the previous literature studied $W_1$.

A DPP has its joint intensities given by the determinant of a positive semi-definite kernel, being the point process fully determined by a suitable choice of the kernel. A type that is especially common is when this is the reproducing kernel of some finite dimensional vector space of functions. The harmonic ensemble is constructed this way, by taking as vector space the eigenfunctions of the Laplace-Beltrami operator with eigenvalue up to a certain threshold. The study of this ensemble has been a motivation for this paper.

The key scheme for the proofs is based on the paper \cite{borda2023riesz}, where the harmonic and spherical ensemble in $\mathbb{S}^2$ and a variation of the harmonic ensemble in $\mathbb{T}^2$ were studied. The idea is to use a ``smoothing inequality'', a Berry--Esseen type inequality for the Wasserstein distance $W_2$ (see \cite{borda2022empirical} and Section \ref{section:smoothing} for the details). We rewrite the application of this inequality for a general manifold and a general point process. The main contributions of this paper are:
\begin{itemize}
	\item We get an optimal result for finite dimensional projection kernels with constant first intensity in dimension $d\geq 3$. This applies to the jittered sampling process and the harmonic ensemble in homogeneous manifolds. We present weaker results for lower dimensions and the harmonic ensemble in general compact manifolds.
	\item Through a different approach, the optimal result is achieved for the harmonic ensemble in two-point homogeneous manifolds of dimension $d\geq 2$. Here the kernel is explicit, so we can bound precisely the variance of smooth linear statistics. 
	\item We discuss variations of the Harmonic Ensemble in the torus $\mathbb{T}^2$, proving also an optimal rate of convergence. For higher dimensions, the previous results can be directly applied.
	\item We study the asymptotics of two other point processes in dimension two: the spherical ensemble and the zeros of the spherical Gaussian Analytic Functions.
\end{itemize}

\section{Background}
Throughout this paper $\mathbb{M}$ will be a compact connected Riemannian manifold without boundary. Its real dimension will be denoted by $d$. The normalized volume form is $\Vol$. When integrating with respect to it, $d\Vol(x)$ is abbreviated to just $dx$, unless it could be confused with some other standard measure. 

The Laplacian $\Delta = - \operatorname*{div} \nabla$ has a decomposition in eigenvalues $0=\lambda_0 < \lambda_1 \leq \lambda_2 \leq \cdots$ and eigenfunctions $\left\{ \varphi_m \right\}_{m=0}^\infty$ that form an orthonormal basis of $L^2(\mathbb{M})$ (see for instance \cite[Theorem 7.2.6]{Buser_2010} for the details). As there can be repetitions, sometimes it is useful to consider the equivalent list $0 = \tilde{\lambda}_0 < \tilde{\lambda}_1 < \tilde{\lambda}_2 < \cdots$ and the vector spaces $E_L = \operatorname{span} \left\{ \varphi_m \;:\; \lambda_m = \tilde{\lambda}_L \right\}$ generated by the eigenfunctions with the same eigenvalue.

A special class of compact manifolds we will work with are the homogeneous manifolds, that is, manifolds such that for any $x,y\in \mathbb{M}$ there exists an isometry $g: \mathbb{M} \to \mathbb{M} \in C^\infty(\mathbb{M})$ with $g(x) = y$. 

A two point homogeneous manifold is a Riemannian manifold such that for any two pairs of points $x_0, x_1 \in \mathbb{M}$ and $y_0, y_1 \in \mathbb{M}$ with $d(x_0, x_1) = d(y_0, y_1)$ there exists an isometry $g$ with $g(x_i) = y_i$.

The complete list of compact connected two-point homogeneous manifolds is known \cite{Wang52}:
\begin{itemize}
    \item The sphere $\mathbb{S}^d$.
    \item The projective spaces $\mathbb{RP}^n$, $\mathbb{CP}^n$, $\mathbb{H}\mathbb{P}^n$ and $\mathbb{OP}^2$.
\end{itemize}
Integrating over projective spaces is very similar to integrating over spheres, and \cite{anderson2022riesz} realizes a parallel study of them by choosing some adequate constants. What we will require to use is the fact that for an integrable function $F: \mathbb{FP}^n \to \mathbb{R}$ of the form $F(x) = f(d(x, x_0))$ we have
\begin{align*}
	\int_{ \mathbb{FP}^n} F(x) dx = \int_0^{\frac{\pi}{2}} f(r) \left( \frac{1}{\gamma_{\mathbb{M}}} \sin^{d-1 }r \cos^{\dim \mathbb{F} - 1 } r \right) \; dr
\end{align*}  
where $\dim \mathbb{F}$ means the dimension of $\mathbb{F}$ as a real vector space and $d$ the dimension of the manifold $d = n \dim \mathbb{F}$. The constant $\gamma_\mathbb{M}$ depends on the projective space we are integrating over, so that its volume is $1$.  
\subsection{Determinantal Point Processes}

A (simple) Point Process on $\mathbb{M}$ is a random variable that outputs subsets of $\mathbb{M}$ without accumulation points. A Determinantal Point Process (or DPP) is a point process whose joint intensities $\rho_k: \mathbb{M}^k \to [0, \infty)$ are given by $\rho_k(x_1, x_2, \dots, x_k) = \det \left( K(x_i, x_j) \right)_{1\leq i,j \leq k}$ for some kernel $K: \mathbb{M} \times \mathbb{M} \to \mathbb{C}$ and a background measure that is fixed as $\operatorname{Vol}$.

Note that the existence of a DPP with kernel $K$ is not guaranteed. There are conditions to impose just to get the functions $\rho_k$ to be non-negative, which is needed to get valid probability distributions. Besides the necessary basic assumptions, Macchi--Soshnikov theorem \cite[Theorem 4.5.5]{Hough_Krishnapur_Peres_2012} answers this question.

We assume that the kernel is locally square integrable on $\mathbb{M}^2$, so we can consider the operator
\begin{align*}
	\mathcal{K}: L^2(\mathbb{M}) & \longrightarrow L^2(\mathbb{M}), \\
	f & \longmapsto  \mathcal{K}f (x) = \int K(x, y) f(y) dy.
\end{align*} 
We are particularly interested in the case when $\mathcal{K}$ is a projection operator over some finite dimensional vector space $V \subseteq L^2(\mathbb{M})$. These are called (finite dimensional) projection DPP. If $\left\{ f_n \right\}_{n=1}^N$ is an orthonormal basis of $V$, we can write the kernel as
\begin{equation*}
	K(x, y) = \sum_{n=1}^N f_n(x) \overline{f}_n(y).
\end{equation*}  

It is a well-known fact that the joint intensities of a point process satisfy that for a measurable function $f: \mathbb{M}^k \to \mathbb{C}$ and random points $x_1, \dots, x_k \in \mathbb{M}$ drawn from the DPP
\begin{align*}
	\mathbb{E}\sum_{x_1, \dots, x_k \text{  distinct}} f(x_1, \dots, x_k) = \int f(x_1, \dots, x_k) \;d \rho_k(x_1, \dots, x_k) .
\end{align*}  
This can be seen in \cite[Pages 10-11]{Hough_Krishnapur_Peres_2012}. The expression we are computing the expected value of is called a statistic of the process. A particularly interesting case concerns linear statistics, that is, the random variable $\sum_{n=1}^N f(x_n)$. 

For finite projection kernels, we can compute the variance of a linear statistic like 
\begin{align} \label{eq:var}
	\Var \left(\sum_{n=1}^N f(x_n)\right) = \frac{1}{2} \iint |f(x)-f(y)|^2 |K(x,y)|^2 \;dxdy. 
\end{align}

For a more detailed explanation of Determinantal Point Processes, we recommend \cite{Hough_Krishnapur_Peres_2012}.

The main DPP we focus our attention in this paper is the \emph{Harmonic ensemble}. 
If we consider the vector space $\operatorname{span} \left\{ E_1, E_2, \dots, E_L \right\} = \operatorname{span}\left\{ \varphi_m \;:\; \lambda_m \leq \tilde{\lambda}_L \right\}$ for $L \in \mathbb{N}$ we can consider the DPP given by the projection over it. Then the kernel is   
\begin{equation*}
	K_L(x, y) := \sum_{\lambda_m \leq \tilde{\lambda}_L} \varphi_m(x) \varphi_m(y)
\end{equation*} 
for $L\in \mathbb{N}$. The number of points is denoted by $N$, that is, $N = \# \left\{ \lambda_m \leq \tilde{\lambda}_L  \right\} = \sum_{k=0}^L \dim E_k$. A useful point-wise bound for this kernel is Hörmander result \cite{Sogge_1987}:
\begin{equation*}
	K_L(x, y) \lesssim \frac{N}{1+ N^{\frac{1}{d}} d(x, y)}. 
\end{equation*}

Another interesting DPP is the spherical ensemble. In the complex plane consider the eigenvalues $z_1, \dots, z_N$ of $A^{-1}B$ for $A, B$ matrices $N\times N$ with entries being i.i.d. standard complex Gaussians. This turns out to be a Determinantal Point Process with a fixed number of points $N$. 
We usually prefer to look it in $\mathbb{C} \cong \mathbb{S}^{2} \backslash \{p\}$. For this, consider the standard stereographic projection $f: \mathbb{S}^{2} \backslash \{p\} \to \mathbb{C}$. Then the kernel can be seen to be
\begin{align*}
	K_N(x, y) = N  \frac{\left(1+ f(x) \overline{f(y)}\right)^{N-1}}{ \left( 1+ |f(x)|^2 \right)^{\frac{N-1}{2}} \left( 1+ |f(y)|^2 \right)^{\frac{N-1}{2}} }.
\end{align*}

Generalizing this ensemble to higher dimensions in a useful way is not a trivial fact. In \cite{BELTRAN20191073} the authors showed there is a \textit{reasonable} generalization to even-dimensional spheres that has constant first intensity. Another process related to it is the projective ensemble, constructed in an analogous way but in the complex projective spaces. See \cite{beltran2017projective} for the details.

The Jittered Sampling process consists in partitioning the manifold $\mathbb{M}$ into $N$ equal area parts, and choosing at random one point in each piece. This turns out to be a DPP with finite dimensional kernel: for a partition $\left\{ A_n \right\}_{n=1}^N$ with $\Vol(A_n) = \frac{1}{N}$, its kernel has the form
\begin{align*}
	K(x, y) = N \sum_{n=1}^N \carac_{A_n}(x) \carac_{A_n}(y).
\end{align*}

\subsection{Spherical Gaussian Analytic Functions}\label{section:GAF}

In chapters 2 and 3 of \cite{Hough_Krishnapur_Peres_2012} they collect the basic theory regarding random Gaussian Analytic Functions, previously studied by Sodin, Tsirelson, and other mathematicians and physicists. Here we present a quick summary of their main properties and how they can be used to get point processes. 

A Gaussian Analytic Function $f: \Lambda \to \mathbb{C}$ is an analytic function in $\Lambda \subseteq \mathbb{C}$, chosen at random with a distribution such that for any $z_1, z_2, \dots, z_N\in \Lambda$ the random vector $(f(z_1), \dots, f(z_N))$ is $N$-dimensional complex Gaussian. 

This GAF has an associated covariance kernel $K: \Lambda \times \Lambda \to \mathbb{C}$, that is characterized by the fact that $(f(z_1), \dots, f(z_N))$ has covariance matrix $(K(z_i, z_j))_{i,j = 1, \dots, N}$. 

In our case, we focus on the following family of GAFs, as they will induce a point process in the sphere invariant under rotations. For $N \in \mathbb{Z}_+$
\begin{equation*}
	f_N(z) = \sum_{n=1}^N a_n \sqrt{ \left( N \atop n \right)} z^n 
\end{equation*}
is a GAF in $\mathbb{C}$. In fact, as this is a polynomial, it can be seen as an analytic function in $\mathbb{C}\cup \left\{ \infty \right\} \cong \mathbb{S}^2$.

The roots of $f_N$ form a simple point process in $\mathbb{C}\cup \left\{ \infty \right\}$. Call them $z_1, z_2, \dots, z_N \in \mathbb{C}$. This process is invariant in distribution under the rotations of the sphere $z \mapsto \frac{\alpha z + \beta}{- \overline{\beta} z + \overline{\alpha}}$, $|\alpha|^2 + |\beta|^2 = 1$, so it preserves the spherical measure $dm^*(z) = \frac{1}{\pi(1+|z|^2)^2} dm(z)$. This motivates lifting the points through the spherical projection $z_n \mapsto x_n \in \mathbb{S}^2$. The points from this induced point process now lie in the compact manifold $\mathbb{S}^2$ and are invariant under its isometry group. 

For the first intensity of a GAF in $\Lambda \subseteq \mathbb{C}$ with kernel $K$ there is the Edelman--Kostlan formula  
\begin{align*}
	\rho_1(z) = \frac{1}{4 \pi} \Delta \log K(z, z).
\end{align*}
For the spherical GAF the covariance kernel is $K(z, w) = (1+ z\overline{w})^N$, leading to $\rho_1(z) = \frac{N}{\pi(1+|z|^2)^2}$. When the points are lifted to the sphere we get constant first intensity $\rho_1^*(x) = N$.   

\subsection{Wasserstein distance}
The Wasserstein distance of two probabilities measures $\mu$ and $\nu$ is defined as 
\begin{align*}
	W_p(\mu, \nu) = \left( \inf \int_{\mathbb{M}\times \mathbb{M}} d(x, y)^p \;d \pi(x, y) \right)^{\frac{1}{p}},
\end{align*}  
where the infimum is taken over all probabilities $\pi$ on $\mathbb{M}\times \mathbb{M}$ with $\mu$ as one marginal and $\nu$ as the other. For $p\geq 1$ this is a distance in the space of probabilities with finite $p$-order moment.

The key property of this metric is that a sequence of probabilities $\mu_N$ converges weakly $\mu_N \rightharpoonup \mu$ if and only if $W_p(\mu_N , \mu) \to 0$. This gives a way of quantifying the speed of convergence for some sequences of probability measures. Of course, there are other interesting approaches to quantify this, like the discrepancy. A standard reference for the theory of Wasserstein distance is \cite{villani_2009}.

It is worth commenting that when we have some points $\{x_n\}_{n=1}^N$ in a $d$-dimensional manifold, there is a classical lower bound for the Wasserstein distance between its empirical measure and the volume. It requires using the fact that the Wasserstein distances are increasing: $W_1 \leq W_p$ for $p \geq 1$, and then we can use the Kantorovich duality to get
\begin{align*}
    W_1 \left( \frac{1}{N} \sum \delta_{x_n} , \operatorname{Vol} \right) & \geq \int d(y, \left\{x_n\right\} ) \;dy.
\end{align*}
Consider now around each point a ball of radius $\delta >0$. Note that the volume is $\operatorname{Vol} B(x, \delta) \approx \delta^d$. Outside these balls, the distance function can be bounded by $d(y, \left\{x_n\right\} ) \geq \delta$. This gives us the bound
\begin{align*}
    W_1 \left( \frac{1}{N} \sum \delta_{x_n} , \operatorname{Vol} \right) & \geq \int_{\{d(y, \left\{x_n\right\} \}) \geq \delta} d(y, \left\{x_n\right\} ) \;dy \geq \delta \big( 1-\operatorname{Vol} \left(\{ d(y, \left\{x_n\right\} ) \leq \delta \} \right) \big) 
    \geq \delta \left( 1 - N c\delta^d \right).
\end{align*}
Finally, take $\delta^d = \frac{1}{2cN}$ to get
\begin{align*}
    W_1 \left( \frac{1}{N} \sum \delta_{x_n} , \operatorname{Vol} \right) \gtrsim \frac{1}{N^{ \frac{1}{d} }}.
\end{align*}
\subsection{Sobolev seminorms}
For functions in $L^2(\mathbb{M})$ and $0<s<1$ the fractional Sobolev spaces are defined as
\begin{align*}
	H^s(\mathbb{M}) = \left\{ f\in L^2(\mathbb{M}) \;:\; [f]_s < \infty \right\}, && [f]_s^2 = \iint \frac{|f(x)-f(y)|^2}{d(x,y)^{d+2s}} \;dxdy.
\end{align*} 
Besides the definition, it will be later used that this seminorm is known for eigenfunctions $[\varphi_m]^2_s \approx \lambda_m^s$, see \cite{Brandolini_Choirat_Colzani_Gigante_Seri_Travaglini_2014} for the computations.
We also require the following proposition. This is an adaptation of \cite[Lemma 4.2]{imbert2019weak} done in \cite[Proposition 2.14]{levi2023linear}, where we highlight that the assumption on the injectivity radius is not necessary for complete manifolds.

\begin{proposition}[{\cite[Proposition 2.14]{levi2023linear}}]\label{prop:sobolev}
    Let $\mathbb{M}$ be a $d$-dimensional complete Riemannian manifold with diameter $d_\mathbb{M}$. Assume there is a constant $c_\mathbb{M} \leq d_\mathbb{M}/2$ such that $\Vol(B(x,r)) \gtrsim r^d$ for $r\leq c_{\mathbb{M}}$. Let $K: \mathbb{M} \times \mathbb{M} \to \mathbb{R} $ be a symmetric kernel such that
    \begin{align*}
        \int_{B(x, 2r) \backslash B(x, r)} |K(x, y)| dy \leq \Lambda r^{-2s}, && \forall x\in \mathbb{M}, \forall r\in \left( 0, \frac{d_\mathbb{M}}{2} \right),
    \end{align*}        
    for some $\Lambda > 0$ and some $s\in (0,1)$. Then there exists a constant $C = C(d, s)$ such that for any $g\in H^s(\mathbb{M})$
    \begin{align*}
        \iint_{d(x,y) < 2c_\mathbb{M}} |g(x)-g(y)|^2 K(x,y) dxdy & \leq C \Lambda [g]^2_s.
    \end{align*}  
\end{proposition}

\section{Main results}
\begin{theorem}\label{thm:projectionkernels}
	Let $\mathbb{M}$ be a compact connected Riemannian manifold without boundary. Given $\left\{ x_n \right\}_{n=1}^N$ drawn from a finite dimensional projection DPP   
	\begin{align*}
		K_N(x, y) = \sum_{m=1}^N f^N_m(x) \overline{f}^N_m (y) 
	\end{align*}  
	with $\left\{ f^N_m \right\}_{m=1}^N$ orthonormal in $L^2(\mathbb{M}) $ and $K_N(x, x) = N$ constant, then
	\begin{itemize}
		\item For dimension $d\geq 3$, $\displaystyle \mathbb{E} W_2\left( \frac{1}{N} \sum \delta_{x_n} , \Vol \right) \lesssim N^{- \frac{1}{d}}$. 
		\item For dimension $d = 2$, $\displaystyle \mathbb{E} W_2\left( \frac{1}{N} \sum \delta_{x_n} , \Vol \right) \lesssim N^{- \frac{1}{2}} \sqrt{\log N}$. 
	\end{itemize}   
\end{theorem}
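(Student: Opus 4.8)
The plan is to feed the variance identity \eqref{eq:var} into the smoothing inequality of Section \ref{section:smoothing}, using as the only structural input the fact that, since $K_N$ is a projection kernel, $\iint |K_N(x,y)|^2\,dx\,dy = \int K_N(x,x)\,dx = N$. In particular no pointwise decay of $K_N$ is needed, which is essential here: a general finite-rank projection kernel with constant diagonal has none beyond $|K_N(x,y)| \le \sqrt{K_N(x,x)K_N(y,y)} = N$.

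Fix a scale $t>0$ and write $\mu_N = \frac1N\sum_n \delta_{x_n}$. The smoothing inequality bounds $\mathbb{E}\,W_2(\mu_N,\Vol)^2$ by $Ct$ plus a weighted sum $\sum_{m\ge 1}\frac{w_m(t)}{\lambda_m}\,\mathbb{E}\,|\widehat{\mu_N}(m)|^2$, where $\widehat{\mu_N}(m)=\int \varphi_m\,d\mu_N$ and the weights satisfy $0\le w_m(t)\le 1$ and decay once $\lambda_m\gtrsim 1/t$ (concretely one may take $w_m(t)=e^{-t\lambda_m}$). Since $K_N(x,x)=N$ is constant, the first-intensity identity gives $\mathbb{E}\sum_n \varphi_m(x_n) = \int \varphi_m(x)K_N(x,x)\,dx = N\int\varphi_m = 0$ for $m\ge 1$, hence $\mathbb{E}\,|\widehat{\mu_N}(m)|^2 = N^{-2}\Var\!\big(\sum_n\varphi_m(x_n)\big)$. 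Plugging in \eqref{eq:var} and exchanging the (nonnegative) sum with the integral gives
\begin{equation*}
  \sum_{m\ge 1}\frac{w_m(t)}{\lambda_m}\,\mathbb{E}\,|\widehat{\mu_N}(m)|^2 \;=\; \frac{1}{2N^2}\iint G_t(x,y)\,|K_N(x,y)|^2\,dx\,dy, \qquad G_t(x,y):=\sum_{m\ge 1}\frac{w_m(t)}{\lambda_m}\,|\varphi_m(x)-\varphi_m(y)|^2.
\end{equation*}

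Now $G_t$ is the squared gradient of a regularized Green function of $\Delta$, and only a crude uniform bound is needed: from $|\varphi_m(x)-\varphi_m(y)|^2\le 2\varphi_m(x)^2+2\varphi_m(y)^2$ we get $G_t(x,y)\le 4\sup_z g_t(z)$ with $g_t(z)=\sum_{m\ge 1}\frac{e^{-t\lambda_m}}{\lambda_m}\varphi_m(z)^2 = \int_t^\infty\big(p_s(z,z)-1\big)\,ds$, where $p_s$ is the heat kernel and we used $\lambda_0=0$, $\varphi_0\equiv 1$. On a compact manifold $p_s(z,z)\lesssim s^{-d/2}$ for $s\le 1$ uniformly, and $p_s(z,z)-1$ decays exponentially for $s\ge 1$ by the spectral gap, so $\sup_z g_t(z)\lesssim t^{-(d-2)/2}$ when $d\ge 3$ and $\sup_z g_t(z)\lesssim \log(1/t)$ when $d=2$. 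This is the only substantive estimate in the argument, and it is where the dimension — and the logarithmic loss at $d=2$ — enters. Combined with $\iint|K_N|^2=N$ this yields
\begin{equation*}
  \mathbb{E}\,W_2(\mu_N,\Vol)^2 \;\lesssim\; t + \frac{1}{N^2}\Big(\sup_z g_t(z)\Big)\,N \;\lesssim\;
  \begin{cases} t + t^{-(d-2)/2}N^{-1}, & d\ge 3,\\[3pt] t + N^{-1}\log(1/t), & d=2.\end{cases}
\end{equation*}

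It remains to optimize in $t$. For $d\ge 3$, taking $t=N^{-2/d}$ balances the two terms, since $t^{-(d-2)/2}N^{-1}=N^{(d-2)/d-1}=N^{-2/d}=t$; thus $\mathbb{E}\,W_2(\mu_N,\Vol)^2\lesssim N^{-2/d}$ and hence $\mathbb{E}\,W_2(\mu_N,\Vol)\le \sqrt{\mathbb{E}\,W_2(\mu_N,\Vol)^2}\lesssim N^{-1/d}$. For $d=2$, taking $t=N^{-1}$ makes the logarithmic term dominate, giving $\mathbb{E}\,W_2(\mu_N,\Vol)^2\lesssim N^{-1}\log N$ and $\mathbb{E}\,W_2(\mu_N,\Vol)\lesssim \sqrt{\log N}\,N^{-1/2}$. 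I expect the only delicate point to be checking that the error term of the smoothing inequality of Section \ref{section:smoothing} indeed contributes $O(t)$ (and not $O(\sqrt t)$) to $\mathbb{E}\,W_2^2$ — the Berry–Esseen type inequality is tailored to produce exactly this — after which the theorem is pure bookkeeping around $\iint|K_N|^2=N$ and the heat-kernel diagonal bound.
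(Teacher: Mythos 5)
Your argument is correct and is essentially the paper's proof in a slightly reorganized form: the paper likewise notes that the constant diagonal kills the mean term, bounds the variance via $|\varphi_m(x)-\varphi_m(y)|^2\le 2|\varphi_m(x)|^2+2|\varphi_m(y)|^2$ and the reproducing identity $\int |K_N(x,y)|^2\,dy=K_N(x,x)=N$ to get $\Var\bigl(\sum_n\varphi_m(x_n)\bigr)\le 2N$, and then feeds this into the smoothing inequality (Lemma \ref{lemma}, with $a=1$, $b=0$). The only cosmetic difference is that you aggregate the spectral sum into the regularized heat kernel on the diagonal and bound its supremum, whereas the paper bounds each mode separately and evaluates $\sum_m e^{-\lambda_m t}\lambda_m^{-1}$ via Weyl's law; these are equivalent estimates.
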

It is immediate that this theorem applies to the Spherical ensemble and the Jittered process, although for the first we will present a different result that gives the optimal rate. It also works in the generalizations of the spherical ensemble discussed before. The Corollary below is a consequence of applying it to the harmonic ensemble.
\begin{corollary}\label{cor:harmonichom}
	Let $\mathbb{M}$ be a compact connected homogeneous manifold and $x_1, \dots, x_N$ be points given by the harmonic ensemble. Then 
		\begin{itemize}
		\item For dimension $d\geq 3$, $\displaystyle \mathbb{E} W_2\left( \frac{1}{N} \sum \delta_{x_n} , \Vol \right) \lesssim N^{- \frac{1}{d}}$. 
		\item For dimension $d = 2$, $\displaystyle \mathbb{E} W_2\left( \frac{1}{N} \sum \delta_{x_n} , \Vol \right) \lesssim N^{- \frac{1}{2}} \sqrt{\log N}$. 
	\end{itemize} 
\end{corollary}
Moreover, when the manifold is not necessarily homogeneous, we can still give some convergence to the background measure.
\begin{theorem}\label{thm:harmoniccompact}
	For a compact connected manifold without boundary, the harmonic ensemble satisfies
	\begin{itemize}
		\item For dimension $d\geq 3$,   $\displaystyle \mathbb{E} W_2\left( \frac{1}{N} \sum \delta_{x_n} , \Vol \right) \lesssim N^{- \frac{2}{d^2}}$. 
		\item For dimension $d = 2$,   $\displaystyle \mathbb{E} W_2\left( \frac{1}{N} \sum \delta_{x_n} , \Vol \right) \lesssim N^{- \frac{1}{2}} \sqrt{\log N}$.  
	\end{itemize}  
\end{theorem}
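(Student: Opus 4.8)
The plan is to run the smoothing scheme of Section~\ref{section:smoothing} with the DPP taken to be the harmonic ensemble $K_L$; this reduces the proof to two separate estimates, one for the \emph{bias} of the empirical measure and one for the \emph{variance} of its linear statistics. Write $\mu=\tfrac1N\sum_n\delta_{x_n}$, let $P_t$ denote the heat semigroup, and set $\hat\mu(m)=\tfrac1N\sum_n\varphi_m(x_n)=\int\varphi_m\,d\mu$. The smoothing inequality gives, for $t$ in an admissible range,
\[
\mathbb{E}\,W_2(\mu,\Vol)^2\;\lesssim\;t+\mathbb{E}\,\bigl\|P_t\mu-\Vol\bigr\|_{\dot H^{-1}}^2\;=\;t+\sum_{m\ge1}\frac{e^{-2\lambda_m t}}{\lambda_m}\,\mathbb{E}\,|\hat\mu(m)|^2,
\]
where $\mathbb{E}\,|\hat\mu(m)|^2=|\mathbb{E}\,\hat\mu(m)|^2+\Var(\hat\mu(m))$; the free parameter $t$ is chosen at the end.

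For the bias term, $\mathbb{E}\,\hat\mu(m)=\tfrac1N\int\varphi_m(x)K_L(x,x)\,dx=\tfrac1N\,\hat E(m)$ for $m\ge1$, where $E:=K_L(\cdot,\cdot)-N$ has mean zero (the trace of a rank-$N$ projection is $N$). Since $\mathbb{M}$ is not assumed homogeneous, $E$ need not vanish; but Hörmander's pointwise Weyl law (see \cite{Sogge_1987}) yields $\|E\|_{L^\infty}\lesssim N^{1-1/d}$, hence
\[
\sum_{m\ge1}\frac{e^{-2\lambda_m t}}{\lambda_m}\,|\mathbb{E}\,\hat\mu(m)|^2\;\le\;\frac{1}{\lambda_1N^2}\,\|E\|_{L^2}^2\;\lesssim\;N^{-2/d}.
\]
For $d\ge3$ this will be dominated by the variance contribution, and for $d=2$ it is $\ll\sqrt{\log N}\,N^{-1/2}$, so in both cases the bias is harmless.

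For the variance, \eqref{eq:var} together with $|\varphi_m(x)-\varphi_m(y)|^2\le 2\varphi_m(x)^2+2\varphi_m(y)^2$ gives $\Var\bigl(\sum_n\varphi_m(x_n)\bigr)\le 2\int\varphi_m(x)^2\bigl(\int|K_L(x,y)|^2\,dy\bigr)\,dx$. Integrating Hörmander's off-diagonal bound $K_L(x,y)\lesssim\tfrac{N}{1+N^{1/d}d(x,y)}$ over $y$ yields $\int|K_L(x,y)|^2\,dy\lesssim N^{2-2/d}$, uniformly in $x$, when $d\ge3$, so $\Var(\hat\mu(m))\lesssim N^{-2/d}$ uniformly in $m$; when $d=2$ one uses instead the exact reproducing-kernel identity $\int|K_L(x,y)|^2\,dy=K_L(x,x)\lesssim N$, giving the sharper $\Var(\hat\mu(m))\lesssim N^{-1}$. (The near-diagonal part of the integral could alternatively be handled via \cite[Proposition 2.14]{levi2023linear} applied to the kernel $|K_L|^2$, with $\Lambda\approx N^{2-2/d}$ and any $s\in(0,1)$, using $[\varphi_m]_s^2\approx\lambda_m^s$.)

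It remains to assemble and optimise. For $d\ge3$, Weyl's law gives $\sum_m e^{-2\lambda_m t}/\lambda_m\lesssim t^{1-d/2}$ for small $t$, so $\mathbb{E}\,W_2(\mu,\Vol)^2\lesssim t+N^{-2/d}t^{1-d/2}+N^{-2/d}$; the choice $t\approx N^{-4/d^2}$ balances the first two terms at $N^{-4/d^2}$, which for $d\ge3$ dominates the bias $N^{-2/d}$, and yields $\mathbb{E}\,W_2(\mu,\Vol)\lesssim N^{-2/d^2}$. For $d=2$ one has $\sum_m e^{-2\lambda_m t}/\lambda_m\lesssim\log(1/t)$, whence $\mathbb{E}\,W_2(\mu,\Vol)^2\lesssim t+N^{-1}\log(1/t)+N^{-1}$, and $t\approx N^{-1}\log N$ gives $\mathbb{E}\,W_2(\mu,\Vol)\lesssim\sqrt{\log N}\,N^{-1/2}$. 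The hard part, and the reason the exponent is $2/d^2$ rather than the optimal $1/d$ of Theorem~\ref{thm:projectionkernels} and Corollary~\ref{cor:harmonichom}, is the weakness of the only off-diagonal estimate available on a general manifold: Hörmander's bound decays merely like $(1+N^{1/d}d(x,y))^{-1}$, so integrated over $y$ it costs a factor $N^{1-2/d}$ in the variance (for $d\ge3$) relative to a genuinely fast-decaying kernel, and it is balancing that loss against the smoothing error $t$ that degrades the rate. Two-point homogeneous manifolds avoid this because their kernel is explicit and decays fast, while general homogeneous manifolds avoid the bias outright (their first intensity is constant); here neither simplification is available, and one must also check — as above — that the genuine bias, controlled only by the $L^\infty$-bound on the pointwise Weyl remainder that Hörmander's theorem provides with no extra hypotheses, stays dominated throughout.
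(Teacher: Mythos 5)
Your proof is correct and follows the paper's overall scheme --- the smoothing inequality \eqref{aux3}, the bias/variance split \eqref{eq:variance}, and Weyl's law --- but the two sub-estimates are handled differently from the paper, and in opposite directions. For the bias you bypass Lemma \ref{lemma} and use Parseval, $\sum_{m\ge1}|\hat E(m)|^2\le\|E\|_{L^2}^2\lesssim N^{2-2/d}$ with $E=K_L(\cdot,\cdot)-N$, so that the bias contributes only $O(N^{-2/d})$ to $\mathbb{E}\,W_2^2$ uniformly in $t$; the paper instead bounds each mode separately by $|\hat E(m)|\le\|E\|_{L^\infty}\lesssim N^{1-1/d}$, which is exactly what forces $a=2-2/d$ in Lemma \ref{lemma} and caps the paper's rate at $N^{-2/d^2}$. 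Your bias estimate is genuinely sharper. For the variance you do the opposite: for $d\ge3$ you integrate Hörmander's off-diagonal bound to get $\int|K_L(x,y)|^2\,dy\lesssim N^{2-2/d}$, whereas the paper (and you yourself, for $d=2$) uses the reproducing identity $\int|K_L(x,y)|^2\,dy=K_L(x,x)\lesssim N$, which holds on any compact manifold and is strictly smaller for $d\ge3$. As a result your bottleneck is the variance while the paper's is the bias; the two losses happen to land on the same exponent $2/d^2$, so the proof of the stated theorem goes through.

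Your closing diagnosis, however, is wrong, and it conceals an actual improvement. The obstruction on a general manifold is not the slow off-diagonal decay of $K_L$: the identity $\int|K_L(x,y)|^2\,dy=K_L(x,x)$ requires no decay at all and gives $\Var(\hat\mu(m))\lesssim N^{-1}$ in every dimension. Combining that with your own Parseval bound for the bias yields
\[
\mathbb{E}\,W_2(\mu,\Vol)^2\;\lesssim\;t\;+\;N^{-1}t^{1-\frac{d}{2}}\;+\;N^{-\frac{2}{d}},
\]
and the choice $t\approx N^{-2/d}$ makes all three terms of order $N^{-2/d}$, i.e.\ $\mathbb{E}\,W_2\lesssim N^{-1/d}$ for every compact connected manifold of dimension $d\ge3$ --- the optimal rate, strictly stronger than the statement you were proving. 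The only thing separating your argument from this better conclusion is the deliberate choice of the weaker variance bound for $d\ge3$; you should use the reproducing identity there as well and state the stronger result, and in any case delete the paragraph attributing the exponent $2/d^2$ to the off-diagonal decay of the kernel.
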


The next theorem follows a different approach and proves the optimal rate of convergence for some families of manifolds.
\begin{theorem}\label{thm:twohomogeneus}
	For a two point homogeneous manifold $\mathbb{M}$ with dimension $d\geq 2$, the harmonic ensemble satisfies 
	\begin{equation*}
		\mathbb{E} W_2\left( \frac{1}{N} \sum \delta_{x_n} , \Vol \right) \lesssim N^{- \frac{1}{d}}.
	\end{equation*} 
\end{theorem}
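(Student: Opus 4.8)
The plan is to run the Berry--Esseen ``smoothing inequality'' of Section~\ref{section:smoothing} against variance estimates for linear statistics that are sharper than those coming from Hörmander's bound \cite{Sogge_1987}, exploiting --- as announced in the introduction --- that on a two-point homogeneous manifold the harmonic kernel $K_L$ is explicit. Since two-point homogeneity forces homogeneity, the first intensity is constant, $K_L(x,x)=N$, hence $\mathbb{E}\sum_n\varphi_m(x_n)=N\int\varphi_m\,dx=0$ for every $m\ge1$, and the smoothing inequality becomes, with no bias terms surviving,
\[
  \mathbb{E}\, W_2\Big(\tfrac1N\textstyle\sum_n\delta_{x_n},\Vol\Big)^2 \;\lesssim\; t \;+\; \frac{1}{N^2}\sum_{m\ge1} a_m(t)\,\Var\!\Big(\sum_n\varphi_m(x_n)\Big),\qquad t>0,
\]
where the weights obey $a_m(t)\approx \lambda_m^{-1}e^{-c\lambda_m t}$ and are in particular $\lesssim\lambda_m^{-1}$ and negligible once $\lambda_m\gtrsim 1/t$. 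By the variance formula \eqref{eq:var}, everything reduces to estimating $\iint|\varphi_m(x)-\varphi_m(y)|^2|K_L(x,y)|^2\,dx\,dy$ for each fixed $m\ge1$.

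For this I would use the Jacobi/Gegenbauer description of $K_L$: on a two-point homogeneous manifold $K_L(x,y)$ is a Christoffel--Darboux sum of Jacobi polynomials in $d(x,y)$, and the classical Darboux / Mehler--Heine asymptotics yield the sharp decay $|K_L(x,y)|\lesssim N\big(1+N^{1/d}d(x,y)\big)^{-(d+1)/2}$, which upgrades Hörmander's exponent $1$ to $(d+1)/2$. Squaring and integrating over a dyadic annulus, and using $\tfrac{r}{1+N^{1/d}r}\le N^{-1/d}$, gives for every $x$ and every $r\in(0,d_\mathbb{M}/2)$
\[
  \int_{B(x,2r)\setminus B(x,r)}|K_L(x,y)|^2\,dy \;\lesssim\; \frac{N^2 r^d}{\big(1+N^{1/d}r\big)^{d+1}} \;\le\; N^{1-1/d}\,r^{-1},
\]
so the nonnegative symmetric kernel $|K_L|^2$ satisfies the hypothesis of \cite[Proposition~2.14]{levi2023linear} with $s=\tfrac12$ and $\Lambda\lesssim N^{1-1/d}$. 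Applying that proposition to $g=\varphi_m$, using the seminorm asymptotics $[\varphi_m]_{1/2}^2\approx\lambda_m^{1/2}$, and bounding the far region $d(x,y)\ge 2c_\mathbb{M}$ crudely (there $|K_L|^2\lesssim N^{1-1/d}$), I would get
\[
  \Var\!\Big(\sum_n\varphi_m(x_n)\Big)\;\lesssim\; N^{1-1/d}\,\lambda_m^{1/2}\qquad(m\ge1).
\]

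Feeding this back into the smoothing inequality and using Weyl's law $\#\{\lambda_m\le T\}\approx T^{d/2}$,
\[
  \mathbb{E}\,W_2^2 \;\lesssim\; t \;+\; \frac{N^{1-1/d}}{N^2}\sum_{m\ge1}\lambda_m^{-1/2}e^{-c\lambda_m t} \;\lesssim\; t \;+\; N^{-1-1/d}\,t^{-(d-1)/2}.
\]
The decisive point is that for $d\ge2$ the exponent $(d-3)/2$ governing $\sum\lambda_m^{-1/2}$ stays strictly above $-1$, so the sum is $\approx t^{-(d-1)/2}$ with \emph{no} logarithmic factor --- exactly the $\sqrt{\log N}$ loss that Theorem~\ref{thm:projectionkernels} incurs in dimension~$2$. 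Choosing $t=N^{-2/d}$ balances the two terms at $N^{-2/d}$, whence $\mathbb{E}\,W_2^2\lesssim N^{-2/d}$, and Jensen's inequality gives $\mathbb{E}\,W_2\lesssim N^{-1/d}$ for every $d\ge2$.

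The only genuinely non-routine ingredient is the sharp kernel decay in the middle step. Hörmander's estimate by itself gives merely $\int_{B(x,2r)\setminus B(x,r)}|K_L|^2\,dy\lesssim N^{2-2/d}$ (attained near $r\approx 1$), which is too weak to close the optimisation when $d\ge2$: one really needs the exponent $(d+1)/2$. On $\mathbb{S}^d$ this is the classical localization of the Christoffel--Darboux--Gegenbauer kernel; for the projective spaces $\mathbb{RP}^n,\mathbb{CP}^n,\mathbb{HP}^n,\mathbb{OP}^2$ it should follow from the Jacobi-polynomial framework of \cite{anderson2022riesz}, after which the remainder is bookkeeping with Weyl's law and the seminorm asymptotics $[\varphi_m]_s^2\approx\lambda_m^s$.
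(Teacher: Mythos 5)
Your overall architecture coincides with the paper's: feed the smoothing inequality (Lemma \ref{lemma}) with a variance bound of the form $\Var(\sum_n\varphi_m(x_n))\lesssim N^{1-1/d}\lambda_m^{1/2}$, obtained by verifying the dyadic-annulus hypothesis of \cite[Proposition 2.14]{levi2023linear} with $s=\tfrac12$ and $\Lambda\approx N^{1-1/d}$ for the kernel $|K_L|^2$, and then use $[\varphi_m]_{1/2}^2\approx\lambda_m^{1/2}$; your bookkeeping ($a=1-\tfrac1d$, $b=\tfrac12$, $\gamma=\tfrac1d$) and your observation that Hörmander's exponent $1$ is insufficient and one needs $(d+1)/2$ are both correct.

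The gap is in how you justify the annulus estimate. The uniform pointwise bound $|K_L(x,y)|\lesssim N\bigl(1+N^{1/d}d(x,y)\bigr)^{-(d+1)/2}$ is false near the cut locus, and so is your claim that $|K_L|^2\lesssim N^{1-1/d}$ in the far region. The Darboux--Szeg\H{o} asymptotic for $P_L^{(\alpha,\beta)}(\cos\theta)$ carries the factor $(\cos\tfrac\theta2)^{-\beta-1/2}$, which blows up as $\theta\to\pi$ whenever $\beta>-\tfrac12$; here $\beta=\tfrac{d-2}{2}$ for $\mathbb S^d$ and $\beta=\tfrac{\dim\mathbb F}{2}-1$ for $\mathbb{FP}^n$, so e.g.\ on the sphere $|K_L(x,-x)|\approx N^{1-1/d}$, not $N^{(d-1)/(2d)}$ as your formula predicts, and for $\mathbb{CP}^n,\mathbb{HP}^n,\mathbb{OP}^2$ the cut locus is a whole submanifold where $|K_L|^2$ exceeds $N^{1-1/d}$. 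The annuli with $r$ close to $d_\mathbb{M}/2$ contain this set, so the hypothesis of the proposition cannot be deduced from your pointwise bound there. The conclusion you need is nonetheless true: integrating the correct Szeg\H{o} asymptotics over the neighbourhood of the cut locus still yields a contribution $\lesssim N^{1-1/d}\,r^{-1}$ (the small volume of that neighbourhood, $\approx s^{\dim\mathbb F-1}\,ds$ near $\theta=\pi$, exactly compensates the growth of the Jacobi factor). This is precisely why the paper splits the variance integral into three regions and estimates the set $\{2d(x,y)>\pi-\varepsilon/L\}$ separately in terms of $\|g\|_{L^2}^2$, absorbing it via the condition $\|f\|_{L^2}^2\leq N^{1/d}[f]_{1/2}^2$, which eigenfunctions satisfy. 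To close your argument you must add this separate treatment of the cut-locus region rather than rely on a global pointwise decay.
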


\begin{theorem}\label{thm:spherical}
	For $\left\{ x_n \right\}_{n=1}^N \subseteq  \mathbb{S}^2$ given by the spherical ensemble, 
	\begin{equation*}
		\mathbb{E} W_2\left( \frac{1}{N} \sum \delta_{x_n} , \Vol \right) \lesssim N^{- \frac{1}{2}}.
	\end{equation*} 
\end{theorem}
It is worth pausing a moment on the fact that i.i.d. points attain the optimal rate for dimensions $d \geq 3$ but for $d=2$ they instead satisfy 
\begin{equation*}
    \mathbb{E} W_2\left( \frac{1}{N} \sum \delta_{x_n} , \Vol \right) \approx N^{- \frac{1}{2}} \sqrt{\log N}.
\end{equation*}
This means that the improvement due to the repulsion of a DPP is visible when $d=2$. In section \ref{section:torus} we will check that the same happens for the harmonic ensemble when the manifold is a torus.

Finally, for the zeros of the GAFs, we also obtain a better rate than i.i.d. points. This is consistent with the fact that, although it is not determinantal, the points still have a little repulsion at short distances. 
\begin{theorem}\label{thm:zerosGAF}
	For $\left\{ x_n \right\}_{n=1}^N \subseteq  \mathbb{S}^2$ given by the zeros of the spherical GAF (as explained in section \ref{section:GAF})
	\begin{equation*}
		\mathbb{E} W_2\left( \frac{1}{N} \sum \delta_{x_n} , \Vol \right) \lesssim N^{- \frac{1}{2}}.
	\end{equation*} 
\end{theorem}
\section{Proofs}
\subsection{Smoothing inequality}\label{section:smoothing}
The main tool to obtain the results is the smoothing inequality, a Berry--Esseen type inequality for the Wasserstein distance $W_2$. This was proved in \cite[Theorem 5]{borda2022empirical}. 

For points in $\left\{ x_n \right\} \subseteq \mathbb{M}$ and any $t\geq 0$ we have the inequality
\begin{equation*}
	W_2\left( \sum_{n=1}^N \frac{1}{N} \delta_{x_n}, \Vol \right) \leq \left( d \cdot t + K(\mathbb{M}) t^{\frac{3}{2}} \right)^{\frac{1}{2}} + 2 \left( \frac{1}{N^2}\sum_{m=1}^\infty \frac{e^{- \lambda_m t}}{\lambda_m} \left| \sum_{n=1}^N \varphi_m(x_n)\right|^2\right)^{\frac{1}{2}},
\end{equation*} 
where the constant $K(\mathbb{M})$ depends only on the manifold, being actually $0$ in many common cases. In any case, we are interested in looking into the asymptotics when $t \to 0$, so the term $t^{\frac{3}{2}}$  can be disregarded at the cost of (possibly) worsening the constant of the term $t$. When the points come from some random process, one can pass to the expected value using the triangle inequality for the $L^2$ norm:
\begin{align} \label{aux3}
    \mathbb{E} W_2\left( \sum_{n=1}^N \frac{1}{N} \delta_{x_n}, \Vol \right) &  \leq \sqrt{\mathbb{E} (W_2^2) } \leq \left( d \cdot t + K(\mathbb{M}) t^{\frac{3}{2}} \right)^{\frac{1}{2}} + 2 \left( \frac{1}{N^2}\sum_{m=1}^\infty \frac{e^{- \lambda_m t}}{\lambda_m}  \mathbb{E} \left( \left| \sum_{n=1}^N \varphi_m(x_n)\right|^2 \right)\right)^{\frac{1}{2}}.
\end{align}
\textbf{Remark:} 
Note that the method requires from the point process an estimate of $\mathbb{E} \left( \left| \sum_{n=1}^N \varphi_m(x_n)\right|^2 \right)$, and from this independently obtains a bound on the Wasserstein distance. This simplification is quite useful and one of the key advantages of the method, as it reduces the problem to studying the first and second intensities, rather than the $N$-point intensity. The main results from this paper have been obtained by bounding this quantity in each case and then substituting in the smoothing inequality. 

This technique is summarized in the following lemma.
\begin{lemma}\label{lemma}
    Let $\left\{ x_n \right\}_{n=1}^N \subseteq \mathbb{M}$ be from some point process. Assume the following estimates
    \begin{align*}
        \mathbb{E} \left( \left|\sum_{n=1}^N \varphi_m (x_n)\right|^2 \right) \lesssim N^a \lambda_m^b, && b+ \frac{d}{2} > 1.
    \end{align*}
    Then 
    \begin{align*}
        \mathbb{E} W_2\left( \sum_{n=1}^N \frac{1}{N} \delta_{x_n}, \Vol \right) \lesssim \frac{1}{N^\gamma}, && \gamma = \frac{2-a}{2b+d}.
    \end{align*}

	If instead we have $b+ \frac{d}{2} = 1$, we get $ \displaystyle \mathbb{E} W_2\left( \sum_{n=1}^N \frac{1}{N} \delta_{x_n}, \Vol \right) \lesssim \frac{\sqrt{\log N}}{N^\gamma}$.
\end{lemma}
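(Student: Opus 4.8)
The plan is to feed the hypothesis straight into the smoothing inequality \eqref{aux3} and then choose the free parameter $t$ optimally, exactly along the lines of the Remark after \eqref{aux3}. \emph{Step 1 (reduction to a spectral sum).} Substituting $\mathbb{E}\bigl(\bigl|\sum_{n=1}^N \varphi_m(x_n)\bigr|^2\bigr) \lesssim N^a \lambda_m^b$ into \eqref{aux3} and bounding $K(\mathbb{M})\,t^{3/2} \leq K(\mathbb{M})\,t$ for $t\leq 1$, the right-hand side becomes, up to constants,
\begin{align*}
    \sqrt{t} \;+\; N^{(a-2)/2}\sqrt{S(t)}, \qquad S(t) := \sum_{m=1}^{\infty} e^{-\lambda_m t}\,\lambda_m^{\,b-1},
\end{align*}
so the whole problem reduces to understanding $S(t)$ as $t\to 0^+$.

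\emph{Step 2 (estimating $S(t)$).} I would invoke Weyl's law — or merely the two-sided bound $\lambda_m \approx m^{2/d}$ that it gives — to compare the sum with an integral. Split at $\lambda_m \approx 1$: there are only finitely many low-frequency terms, and each is bounded (by $e^{-\lambda_m t}\lambda_m^{b-1} \leq \max\{1,\lambda_1^{b-1}\}$), so their total is $O(1)$ uniformly in $t$; the high-frequency part is comparable to $\int_1^{\infty} e^{-\lambda t}\lambda^{b-1}\,d(\lambda^{d/2}) \approx \int_1^{\infty} e^{-\lambda t}\lambda^{\,b+d/2-2}\,d\lambda$. When $b+\tfrac d2 > 1$, the substitution $u=\lambda t$ shows this integral is $\approx \Gamma(b+\tfrac d2 - 1)\,t^{\,1-b-d/2}$ as $t\to 0$; the exponent being negative, this dominates the $O(1)$ term, so $S(t)\lesssim t^{\,1-b-d/2}$. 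When $b+\tfrac d2 = 1$ the integrand is $e^{-\lambda t}\lambda^{-1}$, giving $S(t)\lesssim \log(1/t)$.

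\emph{Step 3 (optimization in $t$).} If $a\geq 2$ the statement is trivial, since $W_2 \leq \operatorname{diam}\mathbb{M}$ already dominates $N^{-\gamma}$; so assume $a<2$, whence $\gamma>0$. In the generic case, Steps 1–2 give a bound $\lesssim \sqrt{t} + N^{(a-2)/2}\,t^{(1-b-d/2)/2}$; balancing the two terms forces $t_N \approx N^{(a-2)/(b+d/2)}$, which tends to $0$ (so that the regime $t\leq1$ and the asymptotics of $S$ are legitimate), and substituting back yields $\mathbb{E}W_2 \lesssim \sqrt{t_N} \approx N^{-(2-a)/(2b+d)} = N^{-\gamma}$. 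In the critical case $b+\tfrac d2=1$ (so $2b+d=2$, $\gamma=\tfrac{2-a}{2}$), the bound is $\lesssim \sqrt{t}+N^{(a-2)/2}\sqrt{\log(1/t)}$; taking $t_N$ to solve $t\approx N^{a-2}\log(1/t)$, e.g. $t_N\approx N^{a-2}\log N$, gives $\mathbb{E}W_2 \lesssim N^{-(2-a)/2}\sqrt{\log N} = N^{-\gamma}\sqrt{\log N}$.

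The only non-routine ingredient is Step 2: converting the discrete spectral sum into the model integral through Weyl's law, and keeping track of the low-frequency terms (where $\lambda_m^{b-1}$ need not be small, but there are only finitely many of them). One also has to confirm that the optimal $t_N$ genuinely goes to $0$, which is where the standing hypothesis $b+\tfrac d2\geq 1$ and the reduction to $a<2$ are used; the remaining manipulations are elementary.
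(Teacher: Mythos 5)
Your proposal is correct and follows essentially the same route as the paper: plug the hypothesis into the smoothing inequality, estimate the spectral sum $\sum_m e^{-\lambda_m t}\lambda_m^{b-1}$ via Weyl's law to get $t^{1-b-d/2}$ (or $\log(1/t)$ in the critical case), and optimize over $t$. The only cosmetic difference is that the paper performs the sum-to-integral comparison by Stieltjes integration against the eigenvalue counting function $F(x)\lesssim x^{d/2}$, whereas you use $\lambda_m\approx m^{2/d}$ directly; both reduce to the same Gamma-integral bound.
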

\begin{proof}
	Let's focus on the case $b+ \frac{d}{2} > 1$ first. To prove it, we use the inequality
	\begin{equation*}
        \sum_{m=1}^{\infty} e^{\lambda_m t} \lambda_m^A \lesssim t^{- \left( A + \frac{d}{2} \right)}
    \end{equation*}
    for $A=b-1$. To compute the series we define the following function and bound it using Weyl's law:
    \begin{align*}
        F(x) = \left|\left\{ m\in \mathbb{N} \;:\; \lambda_m \leq x \right\}\right| = C_d V_M x^{\frac{d}{2}} + O(x^{ \frac{d-1}{2}}) \lesssim x^{\frac{d}{2}}.
    \end{align*}
	Then the series can be bounded as
    \begin{align*}
        &\sum_{m=1}^{\infty} e^{-\lambda_m t}  \lambda_m^A  = \int_0^\infty  e^{-xt} x^A \;dF(x) = - \int_0^\infty F(x) \left( e^{-tx} x^A \right)' dx  = \int_0^\infty F(x) \left( t e^{-tx}x^A - e^{-tx} A x^{A-1} \right) dx, \\
        &I_1  = \int_0^\infty \hspace{-0.3em}F(x) t e^{-tx} x^A dx \lesssim t \int_0^\infty \hspace{-0.3em}e^{-tx} x^{A + \frac{d}{2}} \;dx = \int_0^\infty \hspace{-0.25em}e^{-y} t^{- \left( A + \frac{d}{2}\right)} y^{A+\frac{d}{2}} \;dy = t^{-\left( A + \frac{d}{2} \right)} \Gamma\left( A+\frac{d}{2}-1 \right), \\
        &I_2 = \int_0^\infty \hspace{-0.3em}F(x) e^{-tx} x^{A-1} \;dx \lesssim t^{-\left( A+\frac{d}{2} \right)} \int_0^\infty e^{-y} y^{A-1+\frac{d}{2}} \leq t^{-\left( A + \frac{d}{2} \right)} \Gamma\left(  A+ \frac{d}{2} \right).
    \end{align*}
	We can put this in \eqref{aux3} to get
	\begin{align*}
        \mathbb{E}W_2\left( \sum_{n=1}^N \frac{1}{N} \delta_{x_n}, \Vol \right) & 
        \lesssim t^{\frac{1}{2}} + \left( N^{a-2} t^{-\left( b-1+\frac{d}{2} \right)} \right)^\frac{1}{2} 
        \underset{t=N^{-\alpha}}{=}  (N^{-\alpha})^\frac{1}{2} + \left( N^{a-2+ \alpha \left( b-1+\frac{d}{2} \right)} \right)^\frac{1}{2}
    \end{align*}
	and, optimizing the exponent in terms of $\alpha$ we get the exponent $\gamma = \frac{2-a}{2b+d}$.
	
	For the case when $b+ \frac{d}{2} = 1$ we need a little more care, as the Gamma integrals do not converge. The first observation is that, because $F(x)=0$ for $0 \leq x \leq \lambda_1$, we can eliminate this part of the integrals and bound them with 
    \begin{align*}
        \sum_{m=1}^\infty e^{-t \lambda_m} \lambda_m^{b-1} &  \lesssim \int_{\lambda_1}^{\infty}F(x) e^{-tx} \left( t x^{b-1} + x^{b-2} \right) dx \lesssim \int_{\lambda_1}^{\infty} x^{\frac{d}{2}} e^{-tx} \left( t x^{b-1} + x^{b-2} \right) dx \\
        & = \int_{\lambda_1}^\infty t e^{-tx} + e^{-tx} x^{-1} dx = \int_{\lambda_1}^\infty e^{-y} (1+y^{-1}) dy \leq 1-\log \lambda_1 t \lesssim 1 - \log t.
    \end{align*}
	We can input this in the smoothing inequality, and after substituting $t=N^{-\alpha}$, we get the bound
    \begin{align*}
        \mathbb{E} W_2\left( \sum_{n=1}^N \frac{1}{N} \delta_{x_n}, \Vol \right) \lesssim t^{\frac{1}{2}} + N^{ \frac{a-2}{2}} \left( 1- \log t \right)^{\frac{1}{2}} = N^{\frac{-\alpha}{2}} + N^{ \frac{a-2}{2}} \left( 1+ \alpha \log N \right)^{\frac{1}{2}} \lesssim N^{ \frac{a-2}{2}} \sqrt{\log N}.
    \end{align*}
	
\end{proof}

When one writes the eigenvalues without repetition $0 = \tilde{\lambda}_0 <  \tilde{\lambda}_1 <  \tilde{\lambda}_2 < \cdots$ and $E_\ell$ the vector space of functions with eigenvalue $\tilde{\lambda}_\ell$, the smoothing inequality may be rewritten as
\begin{align*} 
    \mathbb{E} W_2\left( \sum_{n=1}^N \frac{1}{N} \delta_{x_n}, \Vol \right) & \leq \left( d \cdot t + K(\mathbb{M}) t^{\frac{3}{2}} \right)^{\frac{1}{2}} + 2 \left( \frac{1}{N^2}\sum_{\ell=1}^\infty \frac{e^{- \tilde{\lambda}_\ell t}}{\tilde{\lambda}_\ell} \sum_{\varphi_m\in E_\ell} \mathbb{E} \left( \left| \sum_{n=1}^N \varphi_m(x_n)\right|^2 \right)\right)^{\frac{1}{2}}.
\end{align*}
Using this notation we can take advantage of bounding the sum of some eigenfunctions, instead of each one individually. This may be simpler, as in some cases there are summation formulas that can be exploited. 
\begin{lemma}\label{lemma:smoothing2}
	Let $\left\{ x_n \right\}_{n=1}^N$ be from some point process. Assume the estimates:
	\begin{align*}
		\sum_{\varphi_m \in E_\ell} \mathbb{E} \left( \left|\sum_{n=1}^N \varphi_m(x_n)\right|^2 \right) \lesssim N^a \tilde{\lambda}_\ell^b, && \tilde{\lambda}_\ell \approx \ell^2, && b > \frac{1}{2}.
	\end{align*} 
	Then $\displaystyle \mathbb{E}W_2 \left( \frac{1}{N}\sum_{n=1}^N \delta_{x_n}, \operatorname*{Vol} \right) \lesssim N^{-\gamma}$ for $\gamma = \frac{2-a}{2b+1}$.
\end{lemma}
\begin{proof}
	The proof is identical to the one with repeating eigenvalues, taking into account that when computing the series we now consider the mass function 
	\begin{align*}
		F(x) =  \left| \left\{ \tilde{\lambda}_\ell\;:\; \tilde{\lambda}_\ell \leq x \right\} \right| \lesssim x^{\frac{1}{2}}.
	\end{align*}
\end{proof}
\textbf{Remark:} The first new observation is that while previous results were bounding the next quantity directly, it can also be written using the variance,
\begin{equation} \label{eq:variance}
    \mathbb{E} \left( \left|\sum_{n=1}^N \varphi_m (x_n)\right|^2 \right) = \Var \left( \sum_{n=1}^N \varphi_m (x_n) \right) + \left| \mathbb{E}\left( \sum_{n=1}^N \varphi_m(x_n) \right) \right|^2.
\end{equation}
In this form it is often easier to bound. A key example of this is when the first intensity $\rho_1(x)$ is constant, as then the second term vanishes and it only requires a bound in the variance of a smooth linear statistic.

\subsection{Finite Projection kernels}
\begin{proof}[Proof of Theorem \ref{thm:projectionkernels}]
	Due to the previous remark, it is a direct consequence of the following bound to the variance, applied to the smoothing inequality lemma:
	\begin{align*}
		\Var \left( \sum_{n=1}^N g(x_n) \right) & = \frac{1}{2} \iint |g(x)-g(y)|^2 |K_N(x, y)|^2 dxdy \leq 2 \iint |g(x)|^2 |K_N(x, y)|^2 dxdy \\
		& = 2 \int |g(x)|^2 K_N(x, x) dx = 2N \norm{g}^2_{L_2}. 
	\end{align*}
\end{proof}

\begin{proof}[Proof of Corollary \ref{cor:harmonichom}]
	We only need to prove that the harmonic ensemble has constant first intensity under the assumption that the manifold is homogeneous. For this, recall first that if $g: \mathbb{M} \to \mathbb{M}$ is an isometry, then it commutes with the Laplacian (see \cite[Section 4.4]{Canzani}). Then, if $\varphi \in C^\infty(\mathbb{M})$ is an eigenfunction of the Laplacian, the function $\varphi \circ g$ is also an eigenfunction with the same eigenvalue.  
	
	Note that if we denote the vector space $E_\ell = \left\{ f \;:\; \Delta f = - \tilde{\lambda}_\ell f \right\}$ then one can write the harmonic kernel as the sum of the reproducing kernels of $E_1, E_2, \dots, E_L$: 
	\begin{equation*}
		K_L = \sum_{\ell = 0}^{L} \sum_{\varphi_m \in E_\ell} \varphi_m(x) \overline{\varphi_m(y)}.
	\end{equation*} 
	But given an isometry $g$, we have that $\left\{ \varphi_m \circ g  \right\}_{\varphi_m \in E_\ell}$ is another basis of $E_\ell$ and then
	\begin{equation*}
		\sum_{\ell = 0}^{L} \sum_{\varphi_m \in E_\ell} \varphi_m(x) \overline{\varphi_m(y)} = \sum_{\ell = 0}^{L} \sum_{\varphi_m \in E_\ell} \varphi_m(g(x)) \overline{\varphi_m(g(y))}.
	\end{equation*}   

	The assumption of homogeneity on the manifold guarantees that we can always find $g(x) = a$ for $a \in \mathbb{M}$ fixed, so $K(x, x) = K(a, a)$ is constant. Note that the fact $\Vol (\mathbb{M}) = 1$ implies $K(x, x) = N$.  
\end{proof}

\begin{proof}[Proof of Theorem \ref{thm:harmoniccompact}]
	We need to estimate the two parts in \eqref{eq:variance}. Let's start with the variance. As we don't have now that the first intensity is constant, we can use either Hörmander bound or the following consequence of Weyl's law (see \cite{Hormander_1968}): $K_L(x,x) = N + \mathcal{O}(N^{1 - \frac{1}{d}}) \lesssim N$. Therefore 
	\begin{equation*}
		\Var \left( \sum_{n=1}^N \varphi_m(x_m) \right) \leq 2 \int |\varphi_m(x)|^2 K_L(x, x) dx \lesssim N.
	\end{equation*}
	For the other term, 
	\begin{align*}
        \left|\mathbb{E} \left( \sum_{n=1}^N \varphi_m(x_n) \right)\right| & = \left|\int \varphi_m(x) K_L(x,x) \;dx\right| = \left|\int \varphi_m(x) \left( K_L(x,x)- N \right) dx \right|\lesssim \left|\int |\varphi_m(x)| N^{1- \frac{1}{d}} dx \right|,\\
        \left| \mathbb{E}\left( \sum_{n=1}^N \varphi_m(x_n) \right) \right|^2 & \lesssim N^{2- \frac{2}{d}} \int |\varphi_m(x)|^2dx = N^{2- \frac{2}{d}}.
    \end{align*} 
	To sum up, we have gotten that for dimension $d\geq 2$ 
	\begin{align*}
		\mathbb{E} \left( \left|\sum_{n=1}^N \varphi_m(x_n)\right|^2 \right) \lesssim N + N^{2- \frac{2}{d}} \lesssim  N^{2- \frac{2}{d}}.
	\end{align*}
\end{proof}

\subsection{Two point homogeneous manifolds}

The following result was proved in \cite{levi2023linear} for the sphere $\mathbb{S}^d$. Here it is generalized for the projective spaces, as we can take advantage that the kernel can be expressed in terms of Jacobi polynomials when the manifold is two point homogeneous.

\begin{theorem}\label{thm:projectives}
    Let $\mathbb{M}$ be a two point homogeneous manifold with dimension $d\geq 2$ and $f\in H^{\frac{1}{2}}(\mathbb{M})$. Given $N$ points $\left\{ x_n \right\}$ drawn from the harmonic ensemble, the limit
    \begin{align*}
        \lim_{N \to \infty} \frac{1}{N^{1-\frac{1}{d}}} \Var \left( \sum_{n=1}^N f(x_n) \right) = |||f | | |^2_{\frac{1}{2}} 
    \end{align*}   
	exists, and $ |||f | | |_{\frac{1}{2}}$ is a seminorm equivalent to $|||f | | |_{\frac{1}{2}} \approx_d [f]_{\frac{1}{2}}$.
\end{theorem}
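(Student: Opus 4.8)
The plan is to compute the variance of the linear statistic via the exact formula \eqref{eq:var}, exploiting the fact that for a two-point homogeneous manifold the harmonic kernel depends only on the geodesic distance, $K_L(x,y) = k_L(d(x,y))$, and can be written explicitly through Jacobi polynomials $P_n^{(\alpha,\beta)}$ with parameters $\alpha = d/2 - 1$, $\beta = \dim\mathbb{F}/2 - 1$. Starting from
\begin{align*}
    \Var\left(\sum_{n=1}^N f(x_n)\right) = \frac{1}{2}\iint |f(x)-f(y)|^2 |K_L(x,y)|^2 \, dx\, dy,
\end{align*}
I would split the domain of integration into the "near-diagonal" region $d(x,y) < 2c_\mathbb{M}$ and its complement. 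On the far region the kernel is uniformly bounded (by Hörmander's estimate, or directly from the Jacobi asymptotics, $|K_L(x,y)|^2 \lesssim N^{2-2/d} d(x,y)^{-(d+1)}$ away from the diagonal), so that contribution is controlled by $N^{1-1/d}$ times a convergent integral involving $\norm{f}_{L^2}^2$, which is lower-order or comparable. The main term comes from the near-diagonal region, where I want to show
\begin{align*}
    \frac{1}{2}\iint_{d(x,y)<2c_\mathbb{M}} |f(x)-f(y)|^2 |K_L(x,y)|^2 \, dx\, dy = N^{1-1/d}\bigl(|||f|||_{1/2}^2 + o(1)\bigr).
\end{align*}

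The key analytic input is a precise pointwise/averaged description of $|k_L(r)|^2$ for $r$ in the bulk scale $r \sim N^{-1/d}$. Using the Mehler–Heine type asymptotics for Jacobi polynomials (the kernel of the Jacobi ensemble converging, after rescaling, to a Bessel kernel), one gets that $N^{-(2-2/d)}|K_L(x,y)|^2$, as a function of $u = N^{1/d} d(x,y)$, converges to an explicit radial profile $\Psi(u)$ with $\Psi(u) \sim c\, u^{-(d+1)}$ at infinity — integrable against $u^{d-1}\,du$ times the bounded factor $|f(x)-f(y)|^2/u$ once we recognize the $H^{1/2}$ structure. Concretely I would change variables to put the inner integral over $y$ in geodesic polar coordinates around $x$, write $dy \approx \omega_{d-1} r^{d-1}\,dr$ (with the exact Jacobian for $\mathbb{FP}^n$ given in the Background section), and compare
\begin{align*}
    \iint_{d(x,y)<2c_\mathbb{M}} \frac{|f(x)-f(y)|^2}{d(x,y)^{d+1}} \cdot d(x,y)^{d+1}|K_L(x,y)|^2 \, dx\, dy
\end{align*}
with $N^{1-1/d}[f]_{1/2}^2$ by showing the rescaled weight $d(x,y)^{d+1}|K_L(x,y)|^2 N^{-(1-1/d)} \to $ a constant times $u^{d+1}\Psi(u)/N^{1-1/d}$... — more carefully, the weight $W_N(x,y) := |K_L(x,y)|^2$ satisfies $\int_{B(x,2r)\setminus B(x,r)} W_N(x,y)\,dy \lesssim N^{1-1/d} r^{-1}$ uniformly, so the cited Proposition (from \cite{levi2023linear}) with $s = 1/2$, $\Lambda \asymp N^{1-1/d}$ immediately gives the upper bound $\Var \lesssim N^{1-1/d}[f]_{1/2}^2$; the matching lower bound and the existence of the limit then require the asymptotic (not just bound) for $W_N$.

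For the existence of the limit and its identification as a seminorm equivalent to $[f]_{1/2}^2$, the cleanest route is: (i) prove it first for $f$ a finite linear combination of eigenfunctions, where $\Var(\sum\varphi_m(x_n))$ can be computed from the known overlap integrals of three Jacobi-type functions and Weyl's law gives the leading constant $\lambda_m^{1/2}$-behavior, matching $[\varphi_m]_{1/2}^2 \approx \lambda_m^{1/2}$; (ii) extend to all of $H^{1/2}$ by density, using the uniform bound $\Var(\sum f(x_n)) \lesssim N^{1-1/d}[f]_{1/2}^2$ from step (i)'s argument to control the error terms, so that $N^{-(1-1/d)}\Var(\sum f(x_n))$ is a Cauchy family converging to a limiting quadratic form $|||f|||_{1/2}^2$ that is automatically $\lesssim [f]_{1/2}^2$; (iii) get the reverse inequality $|||f|||_{1/2}^2 \gtrsim [f]_{1/2}^2$ from the positivity of the Bessel-kernel profile $\Psi$ on a fixed interval, i.e. $W_N(x,y) \gtrsim N^{2-2/d}$ for $d(x,y) \lesssim N^{-1/d}$, which forces the near-diagonal contribution to dominate a fixed fraction of $[f]_{1/2}^2$. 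I expect the main obstacle to be step (i)/(ii): making the Jacobi-polynomial asymptotics uniform enough — controlling the transition between the bulk regime $r\sim N^{-1/d}$, the intermediate regime, and the edge — so that the rescaled kernel-squared genuinely converges (in the right weighted $L^1$ sense) rather than merely being bounded, and carefully tracking the manifold-dependent constants $\gamma_\mathbb{M}$, $\dim\mathbb{F}$ through the Jacobian so that the parallel treatment of $\mathbb{S}^d$, $\mathbb{RP}^n$, $\mathbb{CP}^n$, $\mathbb{HP}^n$, $\mathbb{OP}^2$ really is uniform. Once Theorem \ref{thm:projectives} is in hand, Theorem \ref{thm:twohomogeneus} follows by feeding $a = 1$, $b = 1/2$ (so $b + d/2 = (d+1)/2 > 1$ for $d \geq 2$) into Lemma \ref{lemma}, which yields $\gamma = (2-1)/(1+d) = 1/(d+1)$ — wait, that is weaker than $N^{-1/d}$, so instead one applies the smoothing inequality directly with the sharper input $\mathbb{E}|\sum\varphi_m(x_n)|^2 = \Var(\sum\varphi_m(x_n)) \lesssim N^{1-1/d}\lambda_m^{1/2}$ (the mean term vanishes by homogeneity), i.e. $a = 1 - 1/d$, $b = 1/2$, giving $\gamma = (2 - (1-1/d))/(1+d) = (1+1/d)/(1+d) = 1/d$, the optimal rate.
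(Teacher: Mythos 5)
Your overall skeleton matches the paper's: write the kernel explicitly via Jacobi polynomials, split the double integral into regions, control the upper bound through the cited Proposition from \cite{levi2023linear} with $s=\tfrac12$ and $\Lambda \asymp N^{1-\frac{1}{d}}$, and feed $a = 1-\tfrac1d$, $b=\tfrac12$ into Lemma \ref{lemma} (your final correction to $\gamma = 1/d$ is exactly right, and the mean term does vanish by homogeneity). The upper-bound half of your argument is essentially the paper's Corollary following Theorem \ref{thm:projectives}, and it is the only part actually needed for Theorem \ref{thm:twohomogeneus}.

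However, the statement you are asked to prove is the \emph{existence and identification of the limit}, and there your proposal has a genuine gap. You assert that $N^{-(2-2/d)}|K_L(x,y)|^2$, rescaled by $u = N^{1/d}d(x,y)$, ``converges to an explicit radial profile $\Psi(u)$.'' This is false as a pointwise statement in the bulk: by Szeg\H{o}'s asymptotics the squared kernel there is $C_{d,L}\,k^2(\theta)\bigl(\cos(\theta(L+\cdot)-\cdot)+O(1/(L\sin\theta))\bigr)^2$, which oscillates on scale $1/L$ and does not converge; only its weak limit (the $\cos^2\mapsto\tfrac12$ average) exists. Since the dominant contribution to the variance comes from $\theta$ of order $1$ (not from $u=O(1)$), this oscillation cannot be swept into a tail estimate. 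The paper's mechanism for this step --- expanding $h_L^2 = \tfrac12 + \tfrac12 a_L + O(b_L)$, killing $b_L$ by dominated convergence, and killing the oscillatory $a_L$ by disintegrating $\Vol\otimes\Vol$ along the distance function and applying Riemann--Lebesgue --- is precisely the missing ingredient; your alternative route (prove the limit for finite eigenfunction combinations via triple-product integrals, then extend by density) is not carried out and would be considerably harder than the direct computation. Separately, your pointwise bound $|K_L(x,y)|^2 \lesssim N^{2-2/d}d(x,y)^{-(d+1)}$ is off by a factor of $N^{1-1/d}$ (the Szeg\H{o} bound gives $|K_L|^2 \lesssim N^{1-1/d}d(x,y)^{-(d+1)}$ away from the diagonal and the antipodal set); with the exponent as you wrote it, the annulus estimate would only yield $\Lambda \asymp N^{2-2/d}$ and hence the suboptimal rate of Theorem \ref{thm:harmoniccompact}. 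Your subsequent annulus bound $\int_{B(x,2r)\setminus B(x,r)}W_N \lesssim N^{1-1/d}r^{-1}$ is the correct one, but it does not follow from the pointwise bound you stated. Finally, note that the far region $2d(x,y)$ near $\pi$ needs its own treatment (the Jacobi asymptotics degenerate at the opposite endpoint); the paper uses a separate Szeg\H{o} estimate there, and this is also where the hypothesis $\norm{f}_{L^2}^2 \leq N^{1/d}[f]_{1/2}^2$ enters in the Corollary.
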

\begin{proof}
	As commented before, the case of the sphere is $\mathbb{M} = \mathbb{S}^d$ \cite[Theorem 2.2]{levi2023linear}, so we can focus on $\mathbb{M} = \mathbb{F}\mathbb{P}^n$. 
	In the projective spaces, the harmonic ensemble has an explicit kernel:
	\begin{align*}
        K_L(x, y) = \frac{\left( \frac{d}{2} + \frac{\dim \mathbb{F}}{2} \right)_L}{ \left( \frac{\dim \mathbb{F}}{2} \right)_L} P_L^{\left( \frac{d}{2}, \frac{\dim \mathbb{F}}{2} -1 \right)} \left( \cos (2d(x, y)) \right) = \frac{N}{\left( L+ \frac{d}{2} \atop L\right)} P_L^{\left( \frac{d}{2}, \frac{\dim \mathbb{F}}{2} -1 \right)} \left( \cos (2d(x, y)) \right)
    \end{align*}
    where the number of points satisfies $\frac{N}{L^d} \to C_d$. Using the formula \eqref{eq:var} for the variance we deduce that the limit we want to compute is
    \begin{align*}
        \lim_{N \to \infty} \frac{1 }{N^{1- \frac{1}{d}}} \Var  \left( \sum_{n=1}^N g(x_n) \right) \approx  \lim_{L \to \infty} \frac{1}{L^{d-1}} \int \int |g(x)-g(y)|^2 K_L(x,y)^2 \;dxdy.
    \end{align*}

    We are going to split the integral into 3 parts, depending on a parameter $0<\varepsilon \ll  1$.

	For the first part, we can use the global bound due to Hörmander:
    \begin{align*}
        K_L(x, y) \leq C \frac{L^{d}}{1+L d(x,y)} \leq C \frac{L^{d-1}}{d(x, y)}.
    \end{align*}
    This gives
    \begin{align*}
        \frac{1}{L^{d-1}} \iint_{d(x,y) < \frac{\varepsilon}{2L}} |g(x) - g(y)|^2 K_L (x, y)^2 dxdy &\lesssim L^{d-1} \iint_{d(x,y) < \frac{\varepsilon}{2L}} \frac{|g(x)-g(y)|^2}{d(x, y)^2} dxdy \\
        &\leq \frac{\varepsilon^{d-1}}{2^{d-1}} \iint_{d(x,y) < \frac{\varepsilon}{2L}} \frac{|g(x)-g(y)|^2}{d(x, y)^{d+1}} dxdy \lesssim \varepsilon^{d-1} [g]_{\frac{1}{2}}^2. \addtocounter{equation}{1} \tag{\theequation} \label{eq:1}
    \end{align*}
	For the points far away, we can use the following lemma, whose proof will be done at the end of this argument.
    \begin{adjustwidth}{2em}{2em}
    \begin{lemma} \label{lemma:szego}
        Let $c$ be a fixed positive constant, $\beta \geq - \frac{1}{2}$ and $\alpha \in \mathbb{R}$. Then for $ \pi - \frac{c}{L} \leq \theta \leq \pi$ we have
        \begin{equation*}
            \left| L \left( \sin \frac{\theta}{2} \right)^{2 \alpha + 1} \left( \cos \frac{\theta}{2} \right)^{2 \beta + 1} P_L^{\left( \alpha, \beta \right)} (\cos \theta)^2 \right| \leq C.
        \end{equation*}
    \end{lemma}
    \end{adjustwidth}
    
    In order to apply it we have to substitute $\alpha = \frac{d}{2}$ and $\beta = \frac{\dim \mathbb{F}}{2} - 1$. Moreover, we will take the convention $\theta = 2d(x, y)$ for the rest of the proof. This way we have
    
    \begin{align*}
        K_L(x, y) = \frac{N}{\left( L + \frac{d}{2}  \atop L \right)} P_L^{\left( \frac{d}{2}, \frac{\dim F}{2} -1 \right)} ( \cos \theta).
    \end{align*}
	It is easy to check that $\displaystyle \frac{N^2}{\left( L + \frac{d}{2}  \atop L \right)^2} \approx L^d$. With these facts we can deduce that
    \begin{align*}
        \frac{1}{L^{d-1}} &\iint_{2d(x,y) > \pi - \frac{\varepsilon}{L}} |g(x) -g(y)|^2 K_L(x,y) dxdy \lesssim \frac{1}{L^{d-1}} \iint_{2d(x,y) > \pi - \frac{\varepsilon}{L}} |g(x)|^2 K_L(x,y) dxdy \\
        &\approx L \int |g(x)|^2 \int_{2d(x,y) > \pi - \frac{\varepsilon}{L}} P_L^{\left( \frac{d}{2}, \frac{\dim F}{2} -1 \right)} (\cos \theta)^2 dy dx \\
        &= L \int |g(x)|^2 \int_{2t > \pi - \frac{\varepsilon}{L}} P_L^{\left( \frac{d}{2}, \frac{\dim F}{2} -1 \right)} (\cos 2t)^2 \frac{1}{\gamma_\mathbb{M}} \sin^{d-1}(t) \cos^{\dim \mathbb{F} -1}(t) \;dt \;dx \\
        &\approx \norm{g}^2_{L^2} \int_{\theta = \pi - \frac{\varepsilon}{L}}^\pi L P_L^{\left( \frac{d}{2}, \frac{\dim F}{2} -1 \right)} (\cos \theta)^2 \sin^{d-1} \left( \frac{\theta}{2} \right) \cos^{\dim \mathbb{F} - 1} \left( \frac{\theta}{2} \right) \;dt \\
        &\lesssim \norm{g}^2_{L^2} \int_{\theta = \pi - \frac{\varepsilon}{L}}^\pi \frac{1}{\sin^2 \left( \frac{\theta}{2} \right)} \approx \norm{g}^2_{L^2} \frac{\varepsilon}{L} \xrightarrow{L \to \infty} 0. \addtocounter{equation}{1} \tag{\theequation} \label{eq:2}
    \end{align*} 
	We can now focus on the middle region
    \begin{equation*}
        \Omega_{\varepsilon, L} = \left\{ (x, y) \in \mathbb{FP}^n \;:\; \frac{\varepsilon}{L} \leq 2d(x, y) = \theta \leq \pi - \frac{\varepsilon}{L} \right\},
    \end{equation*}
    where we will use the following result.
    \begin{adjustwidth}{2em}{2em}
        \begin{lemma*}\cite[Theorem 8.21.13]{Szego_1959}
            Let $\alpha, \beta > -1$. Then for $\frac{c}{L} \leq \theta \leq \pi - \frac{c}{L}$ we have
            \begin{align*}
                P_L^{(\alpha, \beta)} (\cos \theta) &= L^{-\frac{1}{2}} k(\theta) h_L(\theta), \\
                k(\theta) &= \pi^{-\frac{1}{2}}  \left( \sin \frac{\theta}{2} \right)^{- \alpha - \frac{1}{2}} \left( \cos \frac{\theta}{2} \right)^{- \beta - \frac{1}{2}}, \\
                h_L(\theta) &= \cos\left( \left( L + \frac{\alpha + \beta +1}{2} \right) \theta - \left(\alpha + \frac{1}{2}\right) \frac{\pi}{2} \right) + \frac{O(1)}{L \sin \theta}.
            \end{align*}
        \end{lemma*}
    \end{adjustwidth}
    
	After applying it to the harmonic kernel, we have that for $(x, y) \in \Omega_{\varepsilon, L}$
	\begin{align}\label{eq:kernelSzego}
		   K_L(x, y)^2 = \frac{N^2}{L \left( L + \frac{d}{2}  \atop L \right)^2} k^2(\theta) h_L^2(\theta), &&
		k^2(\theta)  = \pi^{-1} \left( \sin \frac{\theta}{2} \right)^{-(d+1)} \left( \cos \frac{\theta}{2} \right)^{-(\dim \mathbb{F} - 1)}.
	\end{align}
    
	We can define now the seminorm
	\begin{equation*}
        \iint |g(x)-g(y)|^2 k\left( 2d(x, y) \right)^2 dxdy =: 2 | | |g| | |^2_{\frac{1}{2}}.
    \end{equation*}
	Because of $k(2d(x,y)) \geq \sin(d(x, y))^{-(d+1)} \geq d(x, y)^{-(d+1)}$ we have $| | |g| | |_{\frac{1}{2}}^2 \gtrsim [g]^2_{\frac{1}{2}}$. For the other inequality, we want to use Proposition \ref{prop:sobolev}, so we have to check that 
	\begin{align*}
        \int_{r \leq d(x, y) \leq 2r} k(2d(x, y))^2 dxdy & \leq \frac{1}{\pi}\int_{r \leq d(x,y)  \leq \frac{\pi}{2}} \frac{dy}{\sin (d(x,y))^{d+1} \cos (d(x,y))^{\dim \mathbb{F} - 1}} \\
        &\approx \int_r^{\frac{\pi}{2}} \frac{(\sin t)^{d-1} (\cos t)^{\dim \mathbb{F} - 1}}{(\sin t)^{d+1} (\cos t)^{\dim \mathbb{F} - 1}} dt = \int_r^{\frac{\pi}{2}} \frac{1}{(\sin t)^2} dt \approx \frac{1}{\tan \frac{r}{4}} \lesssim \frac{1}{r}.
    \end{align*}
	Finally, this proposition gives $|||g| | |^2_{\frac{1}{2}} \lesssim [g]^2_{\frac{1}{2}}$, proving the equivalence of the seminorms.

	We now write $h^2_L(\theta) = \frac{1}{2} + \frac{1}{2} a_{L}(\theta) + b_L(\theta) O(1)$ for
	\begin{align*}
		a_L(\theta) = \cos \left( \theta\left( 2L + \frac{d+\dim \mathbb{F}}{2} \right)  - \frac{\pi}{2} (d+1)\right), &&
        b_L(\theta) = \frac{2}{L\sin \theta} + \frac{1}{L^2 \sin^2 \theta}.
	\end{align*}
	Notice that $b_L(\theta) \xrightarrow{L \to \infty} 0$ pointwise. Because we have the bound $b_L(\theta) \lesssim \frac{1}{\varepsilon} + \frac{2}{\varepsilon^2} \leq \frac{3}{\varepsilon^2}$ we can apply the dominated convergence theorem to get
    \begin{equation*}
        \lim_{L \to \infty}  \iint |g(x)-g(y)|^2 k(\theta)^2 b_L(\theta) O(1) dxdy = 0.
    \end{equation*} 

    If we apply the disintegration theorem to the distance $d: \mathbb{FP}^n \times \mathbb{FP}^n \to [0, \frac{\pi}{2}]$ we get a family of Borel probability measures $\left\{ \mu_t \right\}_{t\in [0, \frac{\pi}{2}]}$ with $\operatorname{supp} \mu_t \subseteq d^{-1}(t) = \left\{ d(x,y)=t \right\}$, and the push-forward measure $\nu = d_*(\Vol \otimes \Vol)$ with expression $d \nu(t) = \frac{1}{\gamma_\mathbb{M}} \sin^{d-1}(t) \cos^{\dim \mathbb{F} - 1} (t) dt$. Because of
    \begin{align*}
        \iint |g(x)-g(y)|^2 k(2d(x,y))^2 dxdy = \int_0^{\frac{\pi}{2}} \iint_{d(x,y)=t} |g(x)-g(y)|^2 k(2t)^2 d \mu_t(x, y) \;d \nu(t)
    \end{align*}
    we deduce that the function
    \begin{equation*}
        F(t) = \frac{1}{\gamma_\mathbb{M}} \sin^{d-1}(t) \cos^{\dim \mathbb{F} - 1} (t) \iint_{d(x,y)=t} |g(x)-g(y)|^2 k(2t)^2 \;d \mu_t(x,y)
    \end{equation*}
    belongs to $F(t) \in L^1([0, \frac{\pi}{2}], dt)$ and, after noticing that 
    \begin{equation*}
        a_L(\theta) \in \left\{ \pm \cos  \left( \theta\left( 2L + \frac{d+\dim \mathbb{F}}{2} \right) \right), \pm \sin  \left( \theta\left( 2L + \frac{d+\dim \mathbb{F}}{2} \right) \right) \right\},
    \end{equation*} 
    we can use the Riemann--Lebesgue lemma to get
    \begin{equation*}
        \iint |g(x)-g(y)|^2k(2d(x, y))^2 a_L(2d(x,y)) \;dxdy = \int_{-\infty}^\infty \carac_{[0, \frac{\pi}{2}]} (t) F(t) a_L(2t) \;dt \xrightarrow{L \to \infty} 0.
    \end{equation*}
\end{proof}
\begin{proof}[Proof of Lemma \ref{lemma:szego}]
    The lemma is a combination of several properties of the Jacobi polynomials. Let's start with \cite[Theorem 8.21.12]{Szego_1959}, which says that for $\alpha > -1$ and an arbitrary real number $\beta$, the following asymptotic is satisfied:
    \begin{align*}
        \left( \sin \frac{\theta}{2} \right)^{\alpha} \left( \cos \frac{\theta}{2} \right)^{\beta} P_{L}^{(\alpha, \beta)} (\cos \theta) = \ell^{- \alpha} \frac{\Gamma (L+ \alpha + 1)}{L!} \left( \frac{\theta}{\sin \theta} \right)^{\frac{1}{2}} J_{\alpha} \left( \ell\theta \right) + \theta^{\alpha+2} O(L^{\alpha})
    \end{align*}
    for $0 < \theta \leq \frac{c}{L}$ and $\ell =  L + \frac{\alpha + \beta + 1}{2} \approx L$. To transform this into a bound, first note that because $L \theta < c$ we have that the error term $\theta^{\alpha+2} O(L^{\alpha}) = O(\theta^2)$ and therefore goes to zero.

    Assuming $\alpha \geq - \frac{1}{2}$ the Bessel function can be bounded by
    \begin{align*}
        \left| J_{\alpha} \left(\ell\theta \right) \right| \lesssim   \left( \ell\theta \right)^{-\frac{1}{2}} \approx (L\theta)^{-\frac{1}{2}}.
    \end{align*}
    The previous inequality is a standard property for $\alpha > - \frac{1}{2}$. The extreme case can be deduced by the explicit formula $J_{-\frac{1}{2}} (t) = \sqrt{ \frac{2}{\pi s}} \cos (s)$.
    
    For the Gamma function, it is well known that 
    \begin{align*}
        \lim_{x \to \infty} \frac{\Gamma(x+ z_0)}{\Gamma(x) z_0^\alpha} = 1, \quad z_0\in \mathbb{C}.
    \end{align*}
    Finally, to include the case $\theta = 0$, one can pass $(\sin \theta)^{\frac{1}{2}} = \left( 2 \sin \frac{\theta}{2} \cos \frac{\theta}{2} \right)^{\frac{1}{2}}$ to the other side. All of this gives
    \begin{align}\label{eq:auxJacobi}
        \left| \left( \sin \frac{\theta}{2} \right)^{\alpha + \frac{1}{2}} \left( \cos \frac{\theta}{2} \right)^{\beta + \frac{1}{2}} P_{L}^{(\alpha, \beta)} (\cos \theta) \right| \lesssim L^{-\frac{1}{2}}.
    \end{align}
    We are interested not in the case $0 \leq \theta < \frac{c}{L}$, but rather the  side $ \pi - \frac{c}{L} < \theta \leq \pi$. Let's denote by $\theta' = \pi - \theta$, so that we have $ \pi - \frac{c}{L} < \theta \leq \pi$ as in the statement and $0 \leq \theta' < \frac{c}{L}$. To pass from one to another, we use the equality \cite[Equation 4.1.3]{Szego_1959} 
    \begin{align*}
        P_L^{(\alpha, \beta)} (x) = (-1)^L P_L^{(\beta, \alpha)} (-x)
    \end{align*}
    to get that $P_L^{(\alpha, \beta)} (\cos \theta) = (-1)^L P_L^{(\beta, \alpha)} (\cos \theta')$. Moreover, this change of variables also translates well with the trigonometry, as $\sin \frac{\theta}{2} = \cos \frac{\theta'}{2}$ and $\cos \frac{\theta}{2} = \sin \frac{\theta'}{2}$. To finish the proof it remains to apply inequality \eqref{eq:auxJacobi} to $P_L^{(\beta, \alpha)} (\cos \theta')$ and square both sides.
\end{proof}

From this theorem, we can deduce a Corollary, which is actually what will be applied.
\begin{corollary}
	For a two-point homogeneous manifold $\mathbb{M}$ with dimension $d\geq 2$ and all functions $f\in H^{\frac{1}{2}}(\mathbb{M})$ with $\norm{f}^2_{L^2} \leq N^{\frac{1}{d}} [f]^2_{\frac{1}{2}}$ the harmonic ensemble satisfies
	\begin{equation*}
		\Var \left( \sum_{n=1}^N f(x_n)  \right) \lesssim N^{1- \frac{1}{d}} [f]^2_{\frac{1}{2}}.
	\end{equation*} 
\end{corollary}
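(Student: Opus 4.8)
The plan is to rerun the three--region decomposition of the variance integral from the proof of Theorem~\ref{thm:projectives}, but with the auxiliary parameter $\varepsilon$ frozen at a fixed constant (say $\varepsilon=1$), which converts the three limits computed there into three bounds that are uniform in $L$ and in $f$. By the variance formula \eqref{eq:var} and $N\approx L^{d}$ (so that the hypothesis reads $\norm{f}^2_{L^2}\lesssim L\,[f]^2_{\frac{1}{2}}$), it suffices to prove
\begin{equation*}
	\frac{1}{L^{d-1}} \iint |f(x)-f(y)|^2 K_L(x,y)^2 \,dx\,dy \;\lesssim\; [f]^2_{\frac{1}{2}}.
\end{equation*}
For the projective spaces $K_L$ is the Jacobi kernel displayed in the proof of Theorem~\ref{thm:projectives}; for $\mathbb{S}^d$ one uses the analogous Gegenbauer kernel (whose endpoint behaviour is milder) and the argument is the same, cf.\ \cite{levi2023linear}.

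Writing $\theta=2d(x,y)$ as in that proof, I would split the domain into $\{\theta<\tfrac1L\}$, $\{\tfrac1L\le\theta\le\pi-\tfrac1L\}$ and $\{\theta>\pi-\tfrac1L\}$. On the near part the global Hörmander bound $K_L(x,y)\lesssim L^{d-1}/d(x,y)$ reproduces \eqref{eq:1}, contributing $\lesssim[f]^2_{\frac{1}{2}}$ once $\varepsilon=1$. On the middle part the Szegő expansion gives $K_L(x,y)^2\approx L^{d-1}\,k(\theta)^2\,h_L(\theta)^2$ with the weight $k$ and the factor $h_L$ from that proof; the point is that with $\varepsilon$ fixed the error $b_L(\theta)=\tfrac{2}{L\sin\theta}+\tfrac{1}{L^2\sin^2\theta}$ is bounded by an absolute constant on this region (since $\sin\theta\gtrsim 1/L$ there), hence so is $h_L^2$, and the contribution is $\lesssim\iint|f(x)-f(y)|^2 k(2d(x,y))^2\,dx\,dy=2\,|||f|||^2_{\frac{1}{2}}\lesssim[f]^2_{\frac{1}{2}}$, the last step being the seminorm equivalence established inside the proof of Theorem~\ref{thm:projectives} (after enlarging the region of integration to the whole space). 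On the far part, where $\cos\theta$ is near $-1$ and the Szegő sine--cosine bound degenerates, I would instead use the size of the Jacobi polynomial in the boundary layer near the endpoint, namely $P_L^{(\frac{d}{2},\frac{\dim\mathbb{F}}{2}-1)}(\cos\theta)^2\lesssim L^{\dim\mathbb{F}-2}$, so $K_L(x,y)^2\lesssim L^{d+\dim\mathbb{F}-2}$ there; since that part has $y$--measure $\lesssim L^{-\dim\mathbb{F}}$, after $|f(x)-f(y)|^2\le 2|f(x)|^2+2|f(y)|^2$ and symmetry its contribution is $\lesssim\tfrac1L\,\norm{f}^2_{L^2}$.

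Adding the three pieces gives $\tfrac{1}{L^{d-1}}\iint|f(x)-f(y)|^2 K_L(x,y)^2\,dx\,dy\lesssim[f]^2_{\frac{1}{2}}+\tfrac1L\,\norm{f}^2_{L^2}$, and the hypothesis absorbs the second term, finishing the proof. The genuinely new point relative to Theorem~\ref{thm:projectives} is that \emph{no limit is taken}: keeping $\varepsilon$ fixed sacrifices the Riemann--Lebesgue and dominated--convergence cancellations of that argument, but in exchange all constants become independent of $N$ and of $f$. I expect the delicate step to be the far region --- one must notice that the trivial bound $|K_L(x,y)|\le N$ is too lossy there, that the correct input is the order-$L^{\dim\mathbb{F}-2}$ endpoint behaviour of the Jacobi polynomials, and that the resulting error $\tfrac1L\norm{f}^2_{L^2}$ is exactly what dictates the $L^2$--norm restriction in the statement (which is harmless in the intended application to the eigenfunctions $\varphi_m$, $m\ge 1$, where $\norm{\varphi_m}_{L^2}=1$ while $N^{1/d}[\varphi_m]^2_{\frac{1}{2}}\approx N^{1/d}\lambda_m^{1/2}\to\infty$).
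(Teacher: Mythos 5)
Your proof is correct and follows essentially the same route as the paper's: the paper likewise freezes $\varepsilon$ (at $0.01$) and reuses the three regions of Theorem~\ref{thm:projectives}, bounding the near region by \eqref{eq:1}, the middle region by $|h_L(\theta)|\le C$ together with the seminorm equivalence $|||f|||^2_{\frac{1}{2}}\lesssim [f]^2_{\frac{1}{2}}$, and the far region by a contribution $\lesssim \norm{f}^2_{L^2}\,\varepsilon/L$ that is absorbed exactly as you do via the hypothesis $\norm{f}^2_{L^2}\leq N^{\frac{1}{d}}[f]^2_{\frac{1}{2}}$. The only deviation is your far-region estimate, where you use the endpoint size $P_L^{(\frac{d}{2},\frac{\dim\mathbb{F}}{2}-1)}(\cos\theta)^2\lesssim L^{\dim\mathbb{F}-2}$ together with the $L^{-\dim\mathbb{F}}$ measure of that shell rather than citing the computation \eqref{eq:2}; this yields the same $\norm{f}^2_{L^2}/L$ bound and is, if anything, more careful about the degeneration of the interior Szeg\H{o} estimate near $\theta=\pi$.
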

\begin{proof}
	This result is a simplification of the previous result, where the limit is computed. We treat the case of the projective spaces following the proof of Theorem \ref{thm:projectives}. The case of the sphere is obtained analogously by adapting the argument of \cite[Theorem 2.2]{levi2023linear}.

	Fix a small value for $\varepsilon$, let's say $\varepsilon=0.01$.
	\begin{itemize}
		\item In the region $d(x,y) \leq \frac{\varepsilon}{2L}$ the inequalities of \eqref{eq:1} work directly.
		\item In the region $\pi \geq 2 d(x, y) \geq \pi - \frac{\varepsilon}{L}$ we use the hypothesis in \eqref{eq:2}.		\item In the middle region, the proof is shorter as we can directly use that $|h_L(\theta)| \leq C$, and hence
		\begin{align*}
			\frac{1}{L^{d-1}} \iint_{\Omega_{\varepsilon, L}} |g(x)-g(y)|^2 K(x, y)^2 dxdy \approx \iint_{\Omega_{\varepsilon, L}} |g(x)-g(y)|^2 k(\theta)^2 h_L(\theta)^2 dxdy \\
			\lesssim  \iint_{\Omega_{\varepsilon, L}} |g(x)-g(y)|^2 k(\theta)^2 dxdy \lesssim [g]^2_{\frac{1}{2}}.
		\end{align*}
	\end{itemize}
\end{proof}
It is clear we can apply this result to the eigenvalues of the Laplacian, so using this bound in the smoothing inequality lemma proves Theorem \ref{thm:twohomogeneus}.

The main idea of the argument has been to use the Szegö result \eqref{eq:kernelSzego} to write (in most of the manifold) the kernel as 
\begin{equation*}
	K_L(x, y)^2 = C_{d, L} \cdot k(d(x, y)) \cdot h_L(x, y),
\end{equation*}
where $C_{d, L}$ is a constant of the appropriate order, the oscillatory term $h_L$ is bounded independently of $L$, and $k$ satisfies
\begin{equation*}
    \int_{\mathbb{M}} F(d(x, y)) dy \approx \int_{0}^{d_\mathbb{M}} F(r) k(r) dr.
\end{equation*}
A natural continuation would be to generalize this approach for general homogeneous manifolds. As some numerical computations suggest that the above expression for the kernel does not already hold on the torus $\mathbb{T}^2$, this idea seems specific to two-point homogeneous manifolds.

\subsection{Spherical ensemble}
For points from the spherical ensemble and $f\in H^2(\mathbb{S})$, \cite[Theorem 2.5]{levi2023linear} says that
\begin{align*}
	\lim_{N \to \infty} \Var \left( \sum_{n=1}^N f(x_n) \right) = \int_{\mathbb{S}^2} |\nabla f|^2.
\end{align*}  
Note that for the eigenfunctions, the right-hand side is just the eigenvalue. This result can be easily modified to be an inequality that we can use to prove Theorem \ref{thm:spherical}, as the point process has constant first intensity and we only need a bound for the variance. For this, the following two results are used. Denote by $\omega_d$ the volume of $\mathbb{S}^{d}$. \begin{lemma}\label{lemma:harm2}\cite[Proposition 2.15]{levi2023linear}
    Let $d \geq 1$ and $\varrho: (0, \pi) \to \mathbb{R}^+$ a non-negative measurable function, and set 
    \begin{align*}
        Q(\varrho) = \frac{\omega_{d-1}}{\omega_d} \int_0^\pi \varrho(r) \sin^{d-1}(r)dr, && K_{d, p} = \int_{\mathbb{S}^{d-1}} |\langle x, e\rangle |^p d\sigma(x), \quad e\in \mathbb{S}^{d-1}.
    \end{align*}
    If $f\in W^{1, p}(\mathbb{S}^d)$, then for any $1 \leq p < \infty$,
    \begin{align*}
       \iint \frac{|f(x)-f(y)|^p}{d(x,y)^p} \varrho(d(x,y)) d\sigma(x)d\sigma(y) \leq K_{d, p} Q(\varrho) \int_{\mathbb{S}^d} |\nabla f(z)|^p d \sigma (z).
     \end{align*}
\end{lemma}
\begin{lemma}\label{lemma:harm1}\cite[Lemma 2.16]{levi2023linear}
    Let $K_N(x, y) = K_N(d(x,y))$ be the kernel of the $N$ points spherical ensemble on $\mathbb{S}^2$. Let, for $p\in \{1, 2\}$,
    \begin{align*}
        C_{p, N} = \left( \frac{1}{\sqrt{N}} \right)^{2-p} \int_{\mathbb{S}^2} K_N(x, y)^2 d(x, y)^p d\sigma(x).
    \end{align*}
    Then, 
    \begin{equation*}
        \lim_{N \to \infty} C_{1, N} = \sqrt{\pi}, \quad \lim_{N \to \infty} C_{2, N} = 4,
    \end{equation*}
    and,
    \begin{align*}
        \varrho_{p, N} (r) = \frac{1}{C_{p, N}} \left( \frac{1}{\sqrt{N}} \right)^{2-p} K_N(r)^2 r^p, \quad N >0,
    \end{align*}
    is a sequence of radial mollifiers on $\mathbb{S}^2$.
\end{lemma}
Applying this next Corollary to Lemma \ref{lemma:smoothing2} proves Theorem \ref{thm:spherical}.
\begin{corollary}
	For $f\in H^2(\mathbb{S}^2)$ and $x_1, x_2, \dots, x_N \in \mathbb{S}^2$ we have 
	\begin{align*}
		\Var \left( \sum_{n=1}^N f(x_n) \right) \lesssim \int_{\mathbb{S}^2} |\nabla f|^2.
	\end{align*}
\end{corollary}
\begin{proof}
	It requires Lemma \ref{lemma:harm1} to express the variance as an integral with a radial mollifier and then use Lemma \ref{lemma:harm2} to transform it into an inequality, noticing that the constants that depend on the mollifier 
    \begin{equation*}
        \frac{\omega_1}{\omega_2} \int_0^\pi K_N(r)^2 r^2 \sin(r) = C \int_{\mathbb{S}^2} K_N(d(x,y))^2 d(x, y)^2 d\sigma(x)
    \end{equation*}
    are proportional.
\end{proof}

\section{Variations of the Harmonic ensemble on the torus}\label{section:torus}
In the torus $\mathbb{T}^d = \mathbb{R}^d / \mathbb{Z}^d$, the eigenfunctions of the Laplacian are well studied. Indeed, they are $f(x) = e^{2\pi i \langle j, x \rangle }$ for the grid $j\in \mathbb{Z}^d$. Each has as eigenvalue $4 \pi^2 \norm{j}^2_{2}$. For the $p$-norms, $1 \leq p \leq \infty$, one may consider the DPP with kernel 
\begin{align*}
	K_{L, p}(x, y) = \sum_{\norm{j}_p \leq L} e^{2\pi i \langle j, x-y \rangle}, && x, y \in \mathbb{T}^d.
\end{align*}
Note that what we have called the harmonic ensemble is the case $p=2$, while \cite{borda2023riesz} called the case $p=\infty$ the harmonic ensemble. 

Assume $\left\{ x_n \right\}_{n=1}^{N}$ are given by the ``$p$-Harmonic ensemble''. The number of points $N$ is the number of lattice integers in the $p$-ball of radius $L$. Then, because the DPP has a projection kernel with $K(x, x)$ constant, we can use Theorem \ref{thm:projectionkernels} to deduce that
\begin{itemize}
	\item $\displaystyle \mathbb{E} W_2 \left( \frac{1}{N} \sum_{n=1}^N \delta_{x_n}, \operatorname{Vol} \right) \lesssim \frac{1}{N^{\frac{1}{d}}}$ for dimension $d\geq 3$. 
	\item $\displaystyle \mathbb{E} W_2 \left( \frac{1}{N} \sum_{n=1}^N \delta_{x_n}, \operatorname{Vol} \right) \lesssim \frac{\sqrt{\log N}}{N^{\frac{1}{2}}}$ for dimension $d=2$. 
\end{itemize}
This already shows an optimal rate of convergence for high dimensions, so our focus will now be on the case $d=2$. The smoothing inequality lemma and the results for two point homogeneous manifolds suggest that we should aim to prove an inequality
\begin{align*}
	\mathbb{E} \left( \left| \sum_{n=1}^N e^{2\pi i \langle k, x_n \rangle}\right|^2 \right) \lesssim N^{\frac{1}{2}} \norm{k}_2, && k\in \mathbb{Z}^2 \backslash \left\{ 0 \right\}.
\end{align*}
Indeed, this approach is what \cite{borda2023riesz} used for the case $p=\infty$, where they could use the geometry of the $L^\infty$ balls to precisely compute the above value. This can be modified to approximately work in the case $p=1$, but computing the exact value for a general $p$ looks too complicated. 

Denote the $p$-balls by $B_p(x, r) = \left\{ y\in \mathbb{R}^d \;:\; \norm{x-y}_p < r \right\}$.

Following the proof of \cite[Lemma 8]{borda2023riesz}, for $k\in \mathbb{Z}^2 \backslash \left\{ 0 \right\}$, we can express this expected value as follows:
\begin{align*}
	\mathbb{E}_p := \mathbb{E} \left( \left| \sum_{n=1}^N e^{2\pi i \langle k, x_n \rangle}\right|^2 \right) & = \int K(x,x) \left| e^{2\pi i \langle k, x \rangle}\right|^2 dx + \iint e^{2\pi i \langle k,x-y \rangle}\left( K(x,x)K(y,y) - K(x, y)^2 \right)dxdy \\
	& = N - \iint e^{2\pi i \langle k,x-y \rangle} K(x, y)^2 dxdy = N - \hspace{-1.5em}\sum_{ \substack{ l, m \in \mathbb{Z}^d \\ \norm{l}_p, \norm{m}_p \leq L  }}\iint e^{2\pi i \langle k+l-m,x-y \rangle}  dxdy \\
	&= N - \hspace{-1.5em}\sum_{ \substack{ l, m \in \mathbb{Z}^d \\ \norm{l}_p, \norm{m}_p \leq L \\ m-l = k }} 1 \\
	& = N - \# \left\{ \text{lattice points in    } \overline{B}_p \left( 0, L \right) \cap \overline{B}_p (k, L) \right\} \\
	& = \# \left\{ \text{lattice points in    } \overline{B}_p \left( 0, L \right) \backslash \overline{B}_p (k, L) \right\}.
\end{align*}

To prove the desired inequality, we can assume without loss of generality that $0 < \norm{k}_p < \frac{L}{2}$. Because of the inclusion $ \overline{B}_p \left( 0, L \right) \backslash \overline{B}_p (k, L) \subseteq  \overline{B}_p \left( 0, L \right) \backslash \overline{B}_p (0, L- \norm{k}_p)$ we get the inequality
\begin{align*}
	\mathbb{E}_p \leq \# \left\{ \text{lattice points in    } \overline{B}_p \left( 0, L \right) \backslash \overline{B}_p (0, L- \norm{k}_p) \right\}.
\end{align*} 

This latter inequality is more adequate to be estimated in a general setting, as it requires estimates of the lattice points inside the $p$-balls, but not on how they intersect. For instance, the case $p=2$ is already trivial to prove if we use some results of the `Gauss circle problem'. Call $F(r)$ the number of integer points of $\mathbb{Z}^2$ inside the closed ball of radius $r$. Then it is a classical result that
\begin{align*}
	F(r) = \pi r^2 + E(r), && |E(r)| \leq 2 \sqrt{2} \pi r.
\end{align*}   
Which, for our case, implies
\begin{align*}
	\mathbb{E}_2 & \leq F(L) - F(L- \norm{k}_2) = \pi L^2 + E(L) - \pi^2 (L-\norm{k}_2)^2 - E(L-\norm{k}_2) \\ &\leq 2\pi  L \norm{k}_2 + 2 \sqrt{2} \pi (L+L-k) \lesssim L \norm{k}_2 \approx N^{\frac{1}{2}} \norm{k}_2.
\end{align*}

We could look for similar estimations for other norms and repeat the argument, but for the sake of completeness we present an elementary one. Again, we will study the $\norm{\cdot}_2$ norm case first and then comment on its generalizations.

Consider that there are $X$ integer lattice points in the set $\overline{B}_2(R, 0) \backslash \overline{B}_2(r, 0)$ with $1 \leq \frac{R}{2} < r < R$. Then we could put a ball of radius $\frac{1}{2}$ centered at each of the points, so that they are disjoint. Moreover, they would be contained in $\overline{B}_2(R + \frac{1}{2}, 0) \backslash \overline{B}_2(r - \frac{1}{2}, 0)$. Comparing the areas we get
\begin{align*}
	X \frac{\pi}{4} \leq \operatorname*{Area} \left( \overline{B}_2(R + \frac{1}{2}, 0) \backslash \overline{B}_2(r - \frac{1}{2}, 0)  \right) \leq 2 \pi (R+\frac{1}{2}) \cdot (R - r + 1) && \Rightarrow && X \lesssim R \cdot (R- r).
\end{align*}

In our case, we had $R = L$ and $R-r = \norm{k}_2$, so we still get $\mathbb{E}_2 \lesssim N^{\frac{1}{2}} \norm{k}_2$. Note that this argument can be generalized to any other norm, as they are all equivalent, at the cost of adjusting the constants. This is summarized in the following theorem.

\begin{theorem}
	Given a norm in $\mathbb{T}^2$ and $x_1, x_2, \ldots, x_N $ drawn from the DPP with kernel
	\begin{align*}
		K(x, y) = \sum_{\norm{k} \leq L} e^{2\pi i \langle k, x-y \rangle},
	\end{align*}
	we have $\displaystyle  \mathbb{E} W_2 \left( \frac{1}{N} \sum_{n=1}^N \delta_{x_n}, \operatorname{Vol} \right) \lesssim \frac{1}{N^{\frac{1}{2}}}$.
\end{theorem}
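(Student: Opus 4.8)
The plan is to reduce the theorem to the single estimate that feeds Lemma~\ref{lemma}, namely a bound of the form $\mathbb{E}\bigl(\bigl|\sum_{n=1}^N\varphi_m(x_n)\bigr|^2\bigr)\lesssim N^a\lambda_m^b$. On $\mathbb{T}^2$ the $L^2$-normalized eigenfunctions of $\Delta$ are exactly the characters $\varphi_k(x)=e^{2\pi i\langle k,x\rangle}$, $k\in\mathbb{Z}^2$, with eigenvalue $\lambda_k=4\pi^2\norm{k}_2^2$, so the quantity to control is precisely $\mathbb{E}_{\norm{\cdot}}=\mathbb{E}\bigl(\bigl|\sum_{n=1}^N e^{2\pi i\langle k,x_n\rangle}\bigr|^2\bigr)$, and since $\norm{k}_2\approx\lambda_k^{1/2}$ it is enough to prove
\begin{equation*}
	\mathbb{E}_{\norm{\cdot}}\lesssim N^{1/2}\,\norm{k}_2,\qquad k\in\mathbb{Z}^2\setminus\{0\},
\end{equation*}
which is the hypothesis of Lemma~\ref{lemma} with $a=b=\tfrac12$ and $d=2$.

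First I would note that the chain of equalities computed above for the norms $\norm{\cdot}_p$ used nothing about $p$ beyond the orthogonality of the characters and the fact that the projection kernel is indexed by the lattice points of a $\norm{\cdot}$-ball of radius $L$; hence for an arbitrary norm
\begin{equation*}
	\mathbb{E}_{\norm{\cdot}}=\#\bigl\{\text{lattice points in }\overline{B}_{\norm{\cdot}}(0,L)\setminus\overline{B}_{\norm{\cdot}}(k,L)\bigr\},
\end{equation*}
and $N\approx L^2$ by equivalence of norms. If $\norm{k}\ge L/2$, then $\norm{k}_2\gtrsim L$ and the trivial bound $\mathbb{E}_{\norm{\cdot}}\le N\approx L^2$ already gives $\mathbb{E}_{\norm{\cdot}}\lesssim N^{1/2}\norm{k}_2$. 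If $\norm{k}<L/2$, I would use, exactly as in the text, the inclusion $\overline{B}_{\norm{\cdot}}(0,L)\setminus\overline{B}_{\norm{\cdot}}(k,L)\subseteq\overline{B}_{\norm{\cdot}}(0,L)\setminus\overline{B}_{\norm{\cdot}}(0,L-\norm{k})$ to reduce the count to the $\norm{\cdot}$-annulus between radii $L-\norm{k}$ and $L$, and then rerun the disjoint-ball packing: a Euclidean ball of radius $\tfrac12$ around each lattice point of the annulus gives disjoint balls inside a slightly fattened annulus, so comparing areas and using the scaling $\operatorname{Area}\bigl(\overline{B}_{\norm{\cdot}}(0,r)\bigr)=A_{\norm{\cdot}}r^2$,
\begin{equation*}
	\tfrac{\pi}{4}\,\mathbb{E}_{\norm{\cdot}}\le\operatorname{Area}\Bigl(\overline{B}_{\norm{\cdot}}\bigl(0,L+\tfrac12\bigr)\setminus\overline{B}_{\norm{\cdot}}\bigl(0,L-\norm{k}-\tfrac12\bigr)\Bigr)=A_{\norm{\cdot}}\bigl((L+\tfrac12)^2-(L-\norm{k}-\tfrac12)^2\bigr)\lesssim L\,\norm{k}.
\end{equation*}
With $\norm{k}\approx\norm{k}_2$ and $L\approx N^{1/2}$ this is $\mathbb{E}_{\norm{\cdot}}\lesssim N^{1/2}\norm{k}_2$.

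Finally I would feed $a=\tfrac12$, $b=\tfrac12$, $d=2$ into Lemma~\ref{lemma}. Since $b+\frac{d}{2}=\frac32>1$ we are in the non-logarithmic regime, and the lemma returns $\mathbb{E} W_2\bigl(\frac{1}{N}\sum_{n=1}^N\delta_{x_n},\operatorname{Vol}\bigr)\lesssim N^{-\gamma}$ with $\gamma=\frac{2-a}{2b+d}=\frac{3/2}{3}=\frac12$, which is the claim (and it matches the classical lower bound recalled earlier, so the rate is optimal for $d=2$).

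The argument is the same as in the $\norm{\cdot}_p$ case, so I do not expect a serious obstacle. The only genuinely new ingredient is the area estimate for the annulus of a general norm ball, which is immediate from the homogeneity of the norm; what remains is bookkeeping of the equivalence constants between $\norm{\cdot}$ and $\norm{\cdot}_2$ and of the harmless $\tfrac12$-enlargements (including the degenerate range $L-\norm{k}-\tfrac12\le0$, where the area bound is only easier), followed by a black-box application of Lemma~\ref{lemma}.
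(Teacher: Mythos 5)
Your proposal is correct and follows essentially the same route as the paper: the same character-orthogonality computation reducing $\mathbb{E}\bigl(\bigl|\sum_n e^{2\pi i\langle k,x_n\rangle}\bigr|^2\bigr)$ to a lattice-point count in $\overline{B}_{\norm{\cdot}}(0,L)\setminus\overline{B}_{\norm{\cdot}}(k,L)$, the same reduction to an annulus, the same disjoint-ball packing bound $\lesssim L\norm{k}$, and the same application of Lemma~\ref{lemma} with $a=b=\tfrac12$, $d=2$. Your treatment is if anything slightly more complete than the paper's, since you explicitly dispose of the range $\norm{k}\gtrsim L$ via the trivial bound $\mathbb{E}\le N$ and note that the $\tfrac12$-enlargements must really be norm-equivalence constants.
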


\begin{proof}
	In order to apply the smoothing procedure, we can follow the previous argument up until the bound
	\begin{align*}
		\mathbb{E} \left( \left|\sum_{n=1}^N e^{2\pi i \langle k, x_n \rangle}\right|^2 \right) \leq \# \left\{ \text{lattice points in    } \overline{B}_{\norm{\cdot}}(0, L) \backslash \overline{B}_{\norm{\cdot}}(0, L - \norm{k}) \right\} = \mathbf{X}, && k\in \mathbb{Z}^2,\;\; 0< \norm{k} < \frac{L}{2}.
	\end{align*}
	We have used the natural notation 
	\begin{align*}
		B_{\norm{\cdot}} (x, r) = \left\{ y\in \mathbb{R}^2 \;:\; \norm{x-y} < r \right\}.
	\end{align*}
	Because all norms are equivalent, there exist constants $0 < a \leq 1 \leq A < \infty$ such that $a \norm{\cdot}_2  \leq \norm{\cdot} \leq A \norm{\cdot}_2$. This translates to the following relationship with the Euclidean balls:
	\begin{align*}
		B_2 (x, \frac{r}{A}) \subseteq B_{\norm{\cdot}} (x, r) \subseteq B_2(x, \frac{R}{a}).
	\end{align*}  
	Then again placing a ball of radius $\frac{1}{2}$ (in the $\norm{\cdot}$ topology) around each point of the lattice and passing to the Euclidean topology when needed, we get the chain of inequalities
	\begin{align*}
		\mathbf{X} \cdot \operatorname{Area} \left( B_2 (0, \frac{1}{2A}) \right) & \leq  \operatorname{Area} \left(  \overline{B}_{\norm{\cdot}}(0, L+ \frac{1}{2}) \backslash \overline{B}_{\norm{\cdot}}(0, L - \norm{k} - \frac{1}{2}) \right) \lesssim \left( L+\frac{1}{2} \right) \left( \norm{k} + 1 \right) \lesssim L \norm{k}.
	\end{align*} 
	This finishes the proof because $N \approx L^2$ due to the equivalence of the norms. 
\end{proof}

\begin{corollary}
	Let $\mathbb{T}^2 = \mathbb{R}^2 / \Gamma$ be a general torus with lattice $\Gamma$. For $\left\{ x_n \right\}_{n=1}^N$ given by the harmonic ensemble we have
	\begin{align*}
		\mathbb{E} W_2 \left( \frac{1}{N} \sum_{n=1}^N \delta_{x_n}, \operatorname{Vol} \right) \lesssim \frac{1}{N^{\frac{1}{2}}}.
	\end{align*} 
\end{corollary}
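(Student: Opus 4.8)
The plan is to reduce the general flat torus to the standard torus equipped with a norm--adapted kernel, a case already settled in the preceding Theorem. On $\mathbb{M} = \mathbb{R}^2/\Gamma$ an orthonormal basis of $L^2(\mathbb{M})$ for the normalized volume is given by the characters $\varphi_\xi(x) = e^{2\pi i \langle \xi, x\rangle}$ indexed by the dual lattice $\Gamma^* = \{\xi \in \mathbb{R}^2 : \langle \xi, \gamma\rangle \in \mathbb{Z} \text{ for all } \gamma\in\Gamma\}$, with $\Delta \varphi_\xi = 4\pi^2 \norm{\xi}_2^2\, \varphi_\xi$. Hence the harmonic ensemble, projecting onto the span of the eigenfunctions of eigenvalue at most $\tilde\lambda_L$, has kernel $K_L(x,y) = \sum_{\xi\in\Gamma^*,\, \norm{\xi}_2\le R} e^{2\pi i \langle\xi, x-y\rangle}$ with $R = \sqrt{\tilde\lambda_L}/(2\pi)$, and its number of points is $N = \#\{\xi\in\Gamma^* : \norm{\xi}_2 \le R\} \approx R^2$.

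First I would pick an invertible matrix $B$ with $\Gamma^* = B\mathbb{Z}^2$, so that $\norm{\xi}_2 \le R$ becomes $\norm{k} \le R$ for the ellipsoidal (hence genuine) norm $\norm{k} := \norm{Bk}_2$ on $\mathbb{R}^2$, and the linear change of coordinates $u = B^\top x$ identifies $\mathbb{R}^2/\Gamma$ with $\mathbb{R}^2/\mathbb{Z}^2$, sending the normalized volume to the normalized volume, the points $\{x_n\}$ to $\{B^\top x_n\}$, and $K_L$ precisely to the kernel $\sum_{\norm{k}\le R} e^{2\pi i\langle k, u-v\rangle}$ of the Theorem. Being linear, this identification is bi-Lipschitz between the flat metric of $\mathbb{R}^2/\Gamma$ and the flat metric it induces on $\mathbb{R}^2/\mathbb{Z}^2$, with constants depending only on $\Gamma$; since $W_2$ changes by at most a multiplicative constant under a bi-Lipschitz change of the underlying metric, the bound $\mathbb{E} W_2 \lesssim N^{-1/2}$ of the Theorem transfers back to $\mathbb{M}$, the value of $N$ being unchanged.

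As a more self-contained alternative avoiding the change-of-coordinates bookkeeping, I would run the computation of the Theorem directly on $\Gamma^*$. The same determinantal manipulation that produced the formula for $\mathbb{E}_p$ gives, for $\xi\in\Gamma^*\backslash\{0\}$, $\mathbb{E}(|\sum_{n=1}^N \varphi_\xi(x_n)|^2) = \#(\Gamma^* \cap (\overline{B}(0,R) \backslash \overline{B}(-\xi,R)))$. This is $\le N$ always, and when $\norm{\xi}_2 < R/2$ it is at most the number of points of $\Gamma^*$ in the annulus $\overline{B}(0,R)\backslash\overline{B}(0,R-\norm{\xi}_2)$; the elementary ball-packing area comparison of the torus section — using that every nonzero vector of $\Gamma^*$ has length bounded below in terms of $\Gamma$ — bounds this count by $C_\Gamma R\norm{\xi}_2$. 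In both regimes we obtain $\mathbb{E}(|\sum \varphi_\xi(x_n)|^2) \lesssim N^{1/2}\norm{\xi}_2 \approx N^{1/2}\lambda^{1/2}$, so Lemma \ref{lemma} applies with $a=b=\tfrac12$ and $d=2$, for which $b+\tfrac d2 = \tfrac32 > 1$ and $\gamma = \tfrac{2-a}{2b+d} = \tfrac12$, giving the claim.

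The only genuine point to verify, and it is routine, is that the passage from $\mathbb{Z}^2$ to an arbitrary lattice $\Gamma^*$ does not spoil the elementary counts: it does not, because $\Gamma^*$ has bounded covolume and bounded-below minimal vector length, so both the Gauss-circle estimate $N \approx R^2$ and the annulus estimate $\lesssim R\norm{\xi}_2$ survive with $\Gamma$-dependent constants (equivalently, in the first route, that the linear identification is simultaneously compatible with the kernel, the volume, and — up to constants — the metric). No analytic input beyond what was already used for $\mathbb{R}^2/\mathbb{Z}^2$ is needed.
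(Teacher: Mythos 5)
Your proposal is correct and follows essentially the same path as the paper: the paper likewise reduces $\mathbb{E}\left(\left|\sum_n \varphi_k(x_n)\right|^2\right)$ to the count of points of $\Gamma^*$ in $\overline{B}(0,L)\backslash\overline{B}(k,L)$ and then invokes the norm-ball theorem on $\mathbb{R}^2/\mathbb{Z}^2$ via the identification $\norm{(x,y)}:=\norm{xv+yw}_2$. Your second route (running the packing estimate directly on $\Gamma^*$ and feeding $a=b=\tfrac12$ into Lemma \ref{lemma}) is exactly the paper's argument unwound, and your first route is only a cosmetic repackaging of the same reduction.
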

\begin{proof}
	Now the eigenfunctions take the parameters not on the grid $\mathbb{Z}^2$ but on the dual lattice $\Gamma^*$, but we can repeat the argument of the regular torus up to the point 
	\begin{align*}
		\mathbb{E} \left( \left| \sum_{n=1}^N e^{2\pi i \langle k, x_n \rangle}\right| \right) = \# \left\{ \text{ lattice points of $\Gamma^*$ in    } \overline{B}(0, L) \backslash \overline{B}(k, L) \right\}, && k\in \Gamma^*\backslash \left\{ 0 \right\}.
	\end{align*}
	We can reduce this to the regular torus case by noticing that this quantity is the number of points in the lattice $\mathbb{Z}^2$ for the balls of some norm $\norm{\cdot}$. If the dual lattice $\Gamma^*$ is generated by the vectors $v$ and $w$ this norm is $\norm{(x, y)} := \norm{xv+ yw}_{L^2}$.
\end{proof}

\section{Zero set of the spherical GAF}

Sodin and Tsirelson showed the variance has the following asymptotic for the zeros $z_1, z_2, \dots, z_N \in \mathbb{C}$ of the spherical GAF
\begin{align*}
    \Var \left( \sum_{z_n=1}^N \varphi(z) \right) = \frac{\kappa}{N} \norm{\Delta_{\mathbb{S}^2} \varphi}^2_{L^2(m^*)} + o\left( \frac{1}{N} \right)
\end{align*} 
for $\varphi\in C^\infty_c(\mathbb{C})$. We are interested in getting a similar result but with a uniform bound $o\left( \frac{1}{N} \right)$ for the family of eigenfunctions of the Laplacian on the sphere.
\begin{theorem}
	For $Y_\ell^m : \mathbb{S}^2 \to \mathbb{R}$ the $m_\ell = 2\ell +1$ eigenfunctions with $\tilde{\lambda}_\ell = \ell(\ell +1)$ we have
    \begin{align*}
        \sum_{m} \Var \left( \sum_{x_n=1}^N Y_m^\ell(x_n) \right) \leq \frac{m_\ell \tilde{\lambda}_\ell^2}{N} \cdot \frac{\pi^2}{6}.
    \end{align*}   
\end{theorem}

Because the point process has constant first intensity (on the sphere), when applying this result to the smoothing inequality (Lemma \ref{lemma:smoothing2}) we get the optimal asymptotics of the Wasserstein distance, proving Theorem \ref{thm:zerosGAF}.

\begin{proof}
	First note that we can compute the variance in the complex plane instead of on the sphere. We will use the notation $Y_m^\ell(z)=: Y_m^\ell(F(z))$ for the standard stereographic projection $F: \mathbb{C} \to \mathbb{S}^2$. When we push forward the background of the sphere and its Laplacian they have the expression
	\begin{align*}
		dm^*(z) = \frac{1}{\pi(1+|z|^2)^2} dm(z), && \Delta_{\mathbb{S}^2} = \frac{1}{4}(1+|z|^2)^2 \Delta_{\mathbb{C}}.
	\end{align*}    
	We can use that for an analytic function $f:\mathbb{C} \to \mathbb{C}$ and $\varphi\in C^\infty_c(\mathbb{C})$ we have the formula
	\begin{equation*}
        \sum_{z\in f^{-1}(0)} \varphi(z) = \int_\mathbb{C} \Delta \varphi(z) \frac{1}{2\pi} \log |f(z)| dm(z).
    \end{equation*}
	This allows us to express the sum of the zeros of our GAF as an integral, which actually is quite similar to its expected value due to the Edelman--Kostlan formula for the first intensity. Indeed, this is what it is used in \cite[Section 3.5]{Hough_Krishnapur_Peres_2012} to prove the asymptotics for one function. We can follow this proof until we arrive at formula (3.5.4):
	\begin{align} \label{aux:4}
        \mathbb{E} \left( \sum_{z_n=1}^N \varphi(z_n) \right) = 4\int_{\mathbb{C}^2} \Delta_{\mathbb{S}^2} \varphi(z) \Delta_{\mathbb{S}^2} \varphi(w) \rho_N(z, w) dm^*(z) dm^*(z)
    \end{align}
	where $\displaystyle \rho_N (z, w) := \operatorname*{Cov} \left( \log \frac{|f_N(z)|}{\sqrt{K(z, z)}}, \log \frac{|f_N(w)|}{\sqrt{K(w, w)}} \right)$ can be expressed as
	\begin{align*}
		\theta(z, w) = \frac{1+ z\overline{w}}{ \sqrt{(1+|z|^2) (1+|w|^2)}}, && \rho_N(z, w) = \sum_{m=1}^\infty \frac{|\theta(z, w)|^{2Nm}}{4m^2} \geq 0.
	\end{align*}
	We are going to use the series to bound the following integral, being a key element that $\rho_N$ is invariant under rotations:
	\begin{align*}
		\int_{\mathbb{C}^2} \rho_N(z, w) dm^*(z) dm^*(w) &= \int_{\mathbb{C}^2} \rho_N(z, 0) dm^*(z) dm^*(w) = \sum_{m=1}^\infty  \frac{1}{4m^2} \int_\mathbb{C} \frac{1}{\pi (1+|z|^2)^{Nm+2}} dm(z) \\
		&= \sum_{m=1}^\infty \frac{1}{4m^2} \int_{r=0}^\infty \frac{2r}{(1+ r^2)^{Nm+2}} dr = \sum_{m=1}^\infty \frac{1}{4m^2} \int_{t=0}^\infty \frac{1}{(1+t)^{Nm+2}} \\
		&= \sum_{m=1}^\infty \frac{1}{4m^2} \frac{1}{Nm+1} \leq \frac{1}{N} \frac{\pi^2}{4\cdot6}.
	\end{align*}
	This suggests we can try to use dominated convergence in \eqref{aux:4}. For a fixed eigenvalue $Y_m^\ell= \varphi$ we need to find an adequate sequence to approximate it. Choose a family of test functions $\xi_k:\in C^\infty_c(\mathbb{C})$ with
    \begin{align*}
        0 \leq \xi_k \leq 1, && |\nabla \xi_k| \leq C, && |\Delta \xi_k| \leq C, && \xi_k |_{\overline{B}(0, k)} \equiv 1, && \xi_k |_{\overline{B}(0, k+1)} \equiv 0.
    \end{align*} 
    Then $\varphi_k := \xi_k \varphi$ are smooth with compact support, converge pointwise to $\varphi$, the Laplacian is
    \begin{equation*}
        \Delta \varphi_k = \Delta \xi_k \varphi + \xi_k \Delta \varphi + \nabla \xi_k \cdot \nabla \varphi_k
    \end{equation*}
    and therefore converges pointwise to $\Delta \varphi$ and is bounded. This means that
    \begin{align*}
        \Var\left( \sum_{n=1}^N Y_m^\ell(z_n) \right) & = 4 \int_{\mathbb{C}^2} \Delta_{\mathbb{S}^2} Y^\ell_m(z) \Delta_{\mathbb{S}^2} Y^\ell_m(w) \rho_L(z, w) dm^*(z) dm^*(w) 
        \\ &= 4\tilde{\lambda}_l^2 \int_{\mathbb{C}^2} Y^\ell_m(z)  Y^\ell_m(w) \rho_L(z, w) dm^*(z) dm^*(w), \\
        \sum_{m}  \Var\left( \sum_{n=1}^N Y_m^\ell(z_n) \right) & = 4\tilde{\lambda}_\ell^2 \int_{\mathbb{C}^2} F(z, w) \rho_L(z, w) dm^*(z) dm^*(w),
    \end{align*}
	where $F: \mathbb{C} \times \mathbb{C} \to \mathbb{R}$ is the reproducing kernel of the space $\left\{\Delta_{\mathbb{S}^2} f = \tilde{\lambda}_l f \right\}$ (with the measure $dm^*$). Note that because this kernel is invariant under rotations we can bound it by the dimension of the space. Combining this with the computation of the integral of $\rho_N$ finishes the proof.
\end{proof}

\printbibliography

@article{levi2023linear,
    author = {Levi, Matteo and Marzo, Jordi and Ortega-Cerdà, Joaquim},
    title = {Linear {S}tatistics of {D}eterminantal {P}oint {P}rocesses and {N}orm {R}epresentations},
    journal = {International Mathematics Research Notices},
    volume = {2024},
    number = {19},
    pages = {12869-12903},
    year = {2024},
    doi = {10.1093/imrn/rnae182},
}

@article{borda2023riesz,
author = {Borda, Bence and Grabner, Peter and Matzke, Ryan W.},
title = {Riesz energy, discrepancy, and optimal transport of determinantal point processes on the sphere and the flat torus},
journal = {Mathematika},
volume = {70},
number = {2},
pages = {e12245},
doi = {https://doi.org/10.1112/mtk.12245},
year = {2024}
}

@article{borda2022empirical,
   title={Empirical measures and random walks on compact spaces in the quadratic {W}asserstein metric},
   volume={59},
   ISSN={0246-0203},
   DOI={10.1214/22-aihp1322},
   number={4},
   journal={Annales de l’Institut Henri Poincaré, Probabilités et Statistiques},
   publisher={Institute of Mathematical Statistics},
   author={Borda, Bence},
   year={2023} }

@book {Szego_1959,
    AUTHOR = {Szeg\H{o}, G\'{a}bor},
     TITLE = {Orthogonal polynomials},
    SERIES = {American Mathematical Society Colloquium Publications, Vol.
              XXIII},
   EDITION = {Fourth},
 PUBLISHER = {American Mathematical Society, Providence, R.I.},
      YEAR = {1975},
   MRCLASS = {42A52 (33A65)},
  MRNUMBER = {0372517},
}

@article{Brandolini_Choirat_Colzani_Gigante_Seri_Travaglini_2014, title={Quadrature rules and distribution of points on manifolds}, DOI={10.2422/2036-2145.201103_007}, journal={ANNALI SCUOLA NORMALE SUPERIORE - CLASSE DI SCIENZE}, author={Brandolini, Luca and Choirat, Christine and Colzani, Leonardo and Gigante, Giacomo and Seri, Raffaello and Travaglini, Giancarlo}, year={2014}, pages={889–923}}

@book{Hough_Krishnapur_Peres_2012, place={Providence (R.I.)}, title={Zeros of {G}aussian analytic functions and {D}eterminantal {P}oint {P}rocesses}, publisher={American Mathematical Society}, author={Hough, John Ben and Krishnapur, Manjunath and Peres, Yuval}, year={2012}}

@book{villani_2009, 
    place={Berlin, Heidelberg}, 
    title={Optimal transport: {O}ld and new}, 
    publisher={Springer Berlin Heidelberg}, 
    author={Villani, Cédric}, 
    year={2009}
}

@article{imbert2019weak, title={The weak {H}arnack inequality for the Boltzmann equation without cut-off}, volume={22}, DOI={https://doi.org/10.4171/jems/928}, number={2}, journal={Journal of the European Mathematical Society}, publisher={European Mathematical Society}, author={Imbert, Cyril and Silvestre, Luís}, year={2019}, pages={507–592} }

@article{Sogge_1987, title={On the convergence of {R}iesz means on compact manifolds}, volume={126}, DOI={10.2307/1971356}, number={3}, journal={The Annals of Mathematics}, author={Sogge, Christopher D.}, year={1987}, pages={439}}

@article{Wang52,
 ISSN = {0003486X},
 author = {Hsien-Chung Wang},
 journal = {Annals of Mathematics},
 number = {1},
 pages = {177--191},
 publisher = {Annals of Mathematics},
 title = {Two-{P}oint {H}omogeneous {S}paces},
 volume = {55},
 year = {1952}
}

@misc{anderson2022riesz,
      title={{R}iesz and {G}reen energy on projective spaces}, 
      author={Austin Anderson and Maria Dostert and Peter J. Grabner and Ryan W. Matzke and Tetiana A. Stepaniuk},
      year={2022},
}

@misc{Canzani, url={https://canzani.web.unc.edu/wp-content/uploads/sites/12623/2016/08/Laplacian.pdf}, title={Analysis on manifolds via the {L}aplacian}, author={Canzani, Yaiza}}

@article{BELTRAN20191073,
title = {A generalization of the spherical ensemble to even-dimensional spheres},
journal = {Journal of Mathematical Analysis and Applications},
volume = {475},
number = {2},
pages = {1073-1092},
year = {2019},
issn = {0022-247X},
author = {Carlos Beltrán and Ujué Etayo},
keywords = {Determinantal point processes, Riesz energy}
}

@misc{beltran2017projective,
      title={The projective ensemble and distribution of points in odd-dimensional spheres}, 
      author={Carlos Beltrán and Ujué Etayo},
      year={2017}
}

@article{Hormander_1968, title={The spectral function of an elliptic operator}, volume={121}, DOI={10.1007/bf02391913}, journal={Acta Mathematica}, author={Hörmander, Lars}, year={1968}, pages={193–218}}

@book{Buser_2010, title={Geometry and spectra of compact {R}iemann surfaces}, ISBN={9780817649920}, publisher={Springer Science and Business Media}, author={Buser, Peter}, year={2010} }
\end{document}
\typeout{get arXiv to do 4 passes: Label(s) may have changed. Rerun}